\newtheorem{thm}{Theorem}[section]
\newtheorem{cor}[thm]{Corollary}
\newtheorem{lem}[thm]{Lemma}
\begin{document}
\title{On the genus filtration of diagrams over two backbones}

\author{Benjamin M. M. Fu$^1$ \ Christian M. Reidys$^2$\footnotemark \\
Department of Mathematics and Computer Science,\\
 University of Southern Denmark,\\
 Campusvej 55, DK-5230 \\
Odense M, Denmark\\
Email$^1$: benjaminfmm@imada.sdu.dk\\
number$^1$: 45-40485667\\
Email$^{2*}$: duck@santafe.edu\\
number$^{2*}$: 45-24409251\\
Fax$^{2*}$: 45-65502325
}
\date{}
\maketitle
\footnotetext[2]{Corresponding author.}
\newpage

\begin{abstract}
In this paper we compute the bivariate generating function of 
$\gamma$-matchings over two backbones, filtered by the number of 
arcs and the topological genus.
$\gamma$-matchings over two backbones are chord-diagrams,
obtained via concatenation and nesting of irreducible shapes of 
topological genus $\le \gamma$. We show that the key information is 
contained in the polynomials counting these shapes and provide 
recursions that allow to compute the latter. In particular we give a
bijection between such irreducible shapes over one and two backbones.
We present two applications of our results. The first is concerned with 
RNA-RNA interaction structures, obtained from the $\gamma$-matchings 
via symbolic methods. We secondly show that, using analytic-combinatorial 
methods, the topological genus satisfies a central limit theorem.
\end{abstract}

{\bf Keywords: }{\ genus, generating function, recursion, matching, RNA interaction structure}
\section{Introduction}
\label{S:introduction}
In this paper we study the generating function of diagrams over two backbones. 
These combinatorial structures are filtered by the number of arcs and also
carry a natural topological filtration induced by the topological genus of 
their associated surface without boundary. 
Diagrams over two backbones play a central role in the context of folding 
algorithms of RNA-RNA interaction structures \citep{Huang:2bb}, 
\textit{i.e.}~complexes formed by two distinct RNA molecules. The key 
point here is that natural interaction structures are composed by 
irreducible ``motifs" of small topological genus. It appears that therefore
topological filtration offer a natural way of classifying such molecules.

It has been shown in \citep{Huang:2bb} that for fixed topological genus, there 
exist only finitely many irreducible motifs, called irreducible shadows. 
This motivates the notion of $\gamma$-diagrams or $\gamma$-matchings, i.e.~diagrams 
over two backbones composed by nesting such irreducible shadows of genus $\le \gamma$.
The algorithmic relevance of this finiteness lies in the fact that theses shadows can 
be individually evaluated and measured. This allows to design RNA folding algorithms that 
go beyond associating a global penalty for crossing arcs, see for instance \citep{Reidys:11a},
where this has been implemented for diagrams over one backbone.

Our main result is the bivariate generating function of $\gamma$-matchings over two 
backbones, filtered by the number of arcs and topological genus, ${\bf Q}_{\gamma}(u,t)$, 
in Corollary~(\ref{C:gamma}). 
The latter is expressed as an algebraic expression involving the polynomials
of irreducible shadows and the generating function of $\gamma$-matchings over one 
backbone ${\bf H}_{\gamma}(u,t)$, computed in \citep{thomas1}.

We finally discuss the implications of our results for RNA interaction structures
\citep{Huang:10a,rip:09}. To this end we show how to derive the relevant 
generating functions
via symbolic methods. In other words the biologically relevant structures can
be constructed in a modular fashion \citep{rita}, 
resulting in a composition of power series. 
We furthermore present a central limit theorem that is a corollary of 
Theorem~(\ref{T:mainthm}) and obtained via singularity analysis of ${\bf Q}_{\gamma}(u,t)$
employing the quasi-powers theorem~\citep{Hkhwang}.

\section{Some basic facts}
\label{S:basic fact}
\subsection{Diagrams}
\label{S:diagrams}
A \emph{diagram} is a labelled graph over the vertex set $[n] = \{1, . . . , n\}$,
in which each vertex has degree $\leq 3$. It can be represented by drawing 
its vertices in a horizontal line and its edges $(i, j)$, where $i < j$, in 
the upper half-plane. A \emph{backbone} is a sequence of connected, consecutive 
integers contained in $[n]$. \emph{A diagram over $b$ backbones} is a diagram 
together with a partition of $[n]$ into the $b$ backbones.

An interval $[i, i +1]$ is called a \emph{gap} if there exists a pair of subsequent 
backbones $B_1$ and $B_2$ such that $i(j)$ is the rightmost(leftmost) vertex 
of $B_1(B_2)$. The vertex $i$ is referred to as \emph{cut vertex}.

We call backbone edges \emph{$B$-arcs} and any other edge simply an arc. We shall
distinguish \emph{exterior} and  \emph{interior} arcs, where the former connect different 
backbones, see Fig.~\ref{F:notation}. Diagrams over multiple backbones without 
exterior arcs are simply disjoint unions of diagrams over one backbone. 

\begin{figure}[ht]
\begin{center}
\includegraphics[width=\textwidth]{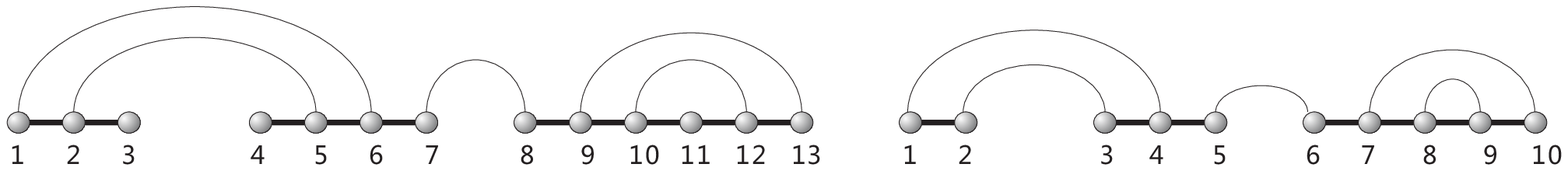}
\end{center}
\caption{\small LHS: a diagram over $[13]$ with arcs $\{(1,6), (2,5), (7,8), (9,13), (10,12)\}$ and $B$-arc
$\{(1,2), (2,3)\}$,$\{(4,5),(5,6),(6,7)\}$,$\{(8,9),(9,10),(10,11), (11,12),$\protect \\ $(12, 13)\}$,
 RHS: a matching derived by removing the isolated vertices and relabelling 
the vertices.}
\label{F:notation}
\end{figure}

The vertices and arcs of a diagram correspond to nucleotides and base pairs, 
respectively. For a diagram over $b$ backbones, the leftmost vertex of each 
backbone denotes the $5'$ end of the RNA sequence, while the rightmost vertex 
denotes the $3'$ end. A particular class of diagrams over two backbones
represents RNA interaction structures \citep{Huang:10a,rip:09}. 
Interaction structures are oftentimes represented alternatively by drawing 
the two backbones $R$ and $S$ on top of each other, where we label the vertices 
$R_1$ to be the $5'$ end of $R$ and $S_1$ to be the $3'$ of $S$. 

Let us next specify first properties of diagrams representing RNA interactions 
structures. A vertex $i$ is \emph{isolated} if it is not incident to any arc (except 
of backbone arcs). A diagram is \emph{connected} if and only if it is connected as a 
combinatorial graph (\textit{i.e.}~employing arcs as well as backbone arcs). 
A diagram that does not contain any isolated vertices is called a \emph{matching}.

An \emph{interior stack} of length $\tau$ is a maximal sequence of ``parallel" interior 
arcs, namely, $((i,j),(i+1,j-1),\cdots,(i+\tau -1,j-\tau +1 ))$. An interior stack 
is \emph{$\tau$-canonical} if it contains at least $\tau$ interior arcs. \emph{Exterior stacks} 
on $[i,j]$ and $\tau$-canonical exterior stacks are defined, accordingly.

A stack on $[i, j]$ of length $k$ naturally induces $(k-1)$ pairs of intervals 
of the form $([i+l, i+l+1],[j-l-1, j-l])$ where $0\leq l \leq k-2$. Any of these 
$2(k-1)$ intervals is referred to as a \emph{$P$-interval}. 
A \emph{$\tau$-canonical interaction structure} is a diagram in which
each stack has length at least $\tau$.
Any interval other than a 
gap or $P$-interval is called a \emph{$\sigma$-interval}. Clearly, a diagram over 
$[n]$, contains $(n-1)$ intervals and we distinguish three types: 
gap intervals, $P$-intervals and $\sigma$-intervals, see Fig.~\ref{F:backint}.
\begin{figure}[ht]
\begin{center}
\includegraphics[width=0.7\textwidth]{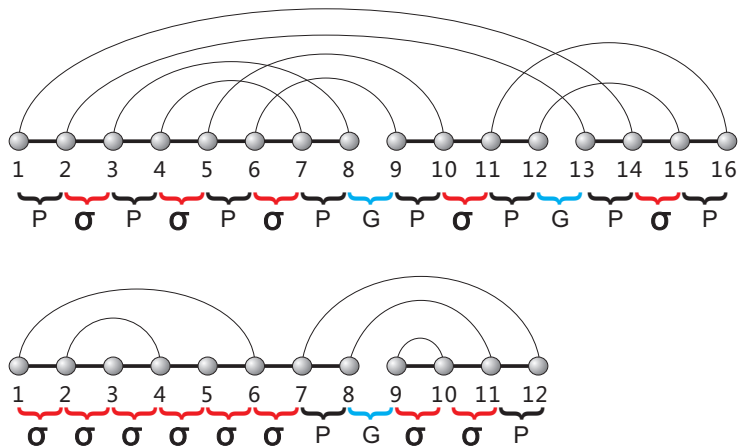}
\end{center}
\caption{\small Stacks and intervals:
gap intervals, $\sigma$-intervals and  $P$-intervals labelled  
by {\bf G}, {\bf $\sigma$} and {\bf P}. 
{\bf Top}: $4$ stacks; $SE_{1,14}^{2}$, $SI_{3,8}^{2}$, $SE_{5,10}^{2}$ and
$SE_{11,16}^{2}$. {\bf Bottom}: $4$ stacks; $SI_{1,6}^{1}$, $SI_{2,4}^{1}$, 
$SE_{7,12}^{2}$ and $SI_{9,10}^{1}$. Only $SE_{7,12}^{2}$ is 
$1$-canonical and $2$-canonical. All other stacks are 
$1$-canonical.
}
\label{F:backint}
\end{figure}
Let $\prec$ be the partial order on arcs given by $(i, j)\prec (i',j')$ if 
and only if $i'\leq i$ and $j \leq j'$.  Any diagram has a unique set of 
\emph{maximal arcs}. cf.~Fig.~\ref{F:maxi}.

\begin{figure}[ht]
\begin{center}
\includegraphics[width=0.7\textwidth]{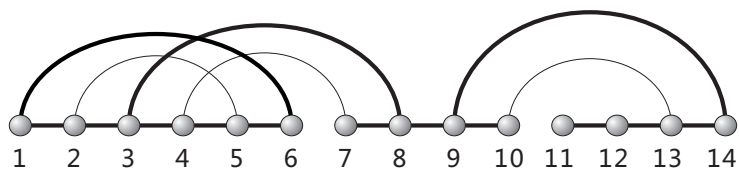}
\end{center}
\caption{\small Maximal arcs: $(1,6)$, $(3,8)$ and 
$(9,14)$ (bold).}
\label{F:maxi}
\end{figure}

\subsection{Diagrams to topological surfaces}
\label{S:surfaces}

The specific drawing of a diagram $G$ in the plane determines a cyclic ordering 
on the half edges of the underlying graph incident on each vertex, thus defining 
a corresponding \emph{fatgraph} $\mathbb{G}$. The collection of cyclic orderings is called 
\emph{fattening}, one such ordering on the half-edges incident on each vertex. Each 
fatgraph $\mathbb{G}$ determines an oriented surface $F(\mathbb{G})$, which is 
connected if $G$ is and has a topological genus $g(F(\mathbb{G}))$. Clearly,
$F(\mathbb{G})$ contains $G$ as a deformation retract and each $\mathbb{G}$ 
represents a cell-complex \citep{Massey:69} over $F(\mathbb{G})$.

A diagram $G$ hence determines a unique surface $F(\mathbb{G})$. 
Equivalence of simplicial and singular homology implies that
Euler characteristic, $\chi$, and genus, $g$, of $F(\mathbb{G})$ 
are independent of the choice of the cell-complex $\mathbb{G}$
and given by $\chi = v-e+r$ and $g=1-\frac{1}{2}\chi$, where 
$v,e,r$ are the number of discs, ribbons and boundary components 
in $\mathbb{G}$. 

Without affecting the topological type of the constructed surface, 
one may collapse each backbone to a single vertex with the induced 
fattening called \emph{the polygonal model of the RNA}. It is the orientation 
of each backbone from the $5'$ end to the $3'$ end that allows to 
transform the fatgraph of an RNA-structure or RNA-interaction structure into 
a fatgraph with one or two vertices. This backbone-collapse preserves 
orientation, Euler characteristic and genus, by construction. It is 
reversible by inflating each vertex to form a backbone. Using the 
collapsed fatgraph representation, we see that for a connected 
diagram over $b$ backbones, the genus $g$ of the surface is 
determined by the number $n$ of arcs and the number 
$r$ of boundary components, namely, $2-2g-r=v-e=b-n$.
 
Diagrams over one and two backbones are related by gluing, \textit{i.e.}, 
we have the mapping
\begin{displaymath}
 \alpha: \mathcal{E} \rightarrow \mathcal{D},
\end{displaymath}
where $\alpha(E)$ is obtained by keeping all arcs in $E$ and 
connecting the $3'$ end of $R$ and the $5'$ end of $S$. 
Furthermore, given two diagrams over two backbones, 
$E_1,E_2 \in \mathcal{E}$, we can 
insert $E_2$ into the gap of $E_1$ via concatenating 
the backbones $R_2$ and $R_1$ and 
$S_1$, $S_2$ preserving orientation. 
This composition is again a diagram over two backbones,
 $E_1\bullet E_2$, \textit{i.e.}~we have
\begin{displaymath}
 \mu: \mathcal{E} \times \mathcal{E} \rightarrow \mathcal{E},\quad
\mu (E_1,E_2)=E_1 \bullet E_2.
\end{displaymath}
It is straightforward to see that $\bullet$ is an associative 
product with unit given by the diagram over two empty backbones.
The product $\bullet$ is not commutative.

\subsection{Shadows}
A \emph{shadow} is a diagram with no non-crossing arcs or isolated vertices in
which each stack has size one. The shadow of a diagram is obtained by 
removing all non-crossing arcs, deleting all isolated vertices and collapsing 
each induced stack to a single arc. We shall denote the shadow of a diagram
 $X$ by $sd(X)$, note that $sd^2(X) =sd(X)$. Projecting into the shadow does 
not affect genus, \textit{i.e.}, $g(X) = g(sd(X))$. In case there 
are no crossing arcs, $sd(X)$ becomes an empty diagram on the same number 
of backbones as $X$. By definition, any empty backbone contributes one boundary
component. For example, for a diagram $X$ over $b$ backbones that contains
no crossing arcs, $sd(X)$ is a sequence of $b$ empty backbones with $b$ 
boundary components.

In the case of the shadows over two backbones, We distinguish the shadows by
type $A$ and type $B$. \emph{$A$-shadows} are those where both backbones are 
contained in one boundary component, all others are referred to as \emph{$B$-shadows}.
Let $\mathcal{A}_{g,m}$ denote the class of all the $A$-shadows of genus $g$ with 
$m$ arcs, and let $\mathcal{B}_{g,m}$ denote the class of all the $B$-shadows 
of genus $g$ with $m$ arcs.

Given a shadow over one backbone, we select any of its arcs, $a$. 
We inflate $a$ into a stack of size two and call the resulting diagram a 
\emph{$d$-shadow}. Let $\mathcal{I}_{g,m}$ be the class of shadows over one 
backbone having genus $g$ and $m$ arcs and $\mathcal{D}_{g,m+1}$ the class
of the $d$-shadows, having $(m+1)$ arcs, induced by $\mathcal{I}_{g,m}$.

\begin{lem}\label{L:bijection}
 There is a bijection 
\begin{displaymath}
 \alpha\colon \mathcal{A}_{g,m} \mathaccent\cdot\cup\; \mathcal{B}_{g-1,m} 
\longrightarrow \mathcal{I}_{g,m} \mathaccent\cdot\cup\; \mathcal{D}_{g,m}.
\end{displaymath}
\end{lem}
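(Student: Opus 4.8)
The map $\alpha$ is already defined on diagrams over two backbones: it keeps all arcs and glues the $3'$ end of $R$ to the $5'$ end of $S$, producing a diagram over one backbone. The plan is to show that when restricted to shadows, $\alpha$ sends $\mathcal{A}_{g,m}$ into $\mathcal{I}_{g,m}$ and $\mathcal{B}_{g-1,m}$ into $\mathcal{D}_{g,m}$, and that the combined map is a bijection onto the disjoint union $\mathcal{I}_{g,m}\,\dot\cup\,\mathcal{D}_{g,m}$. First I would analyze what the gluing does to the cut vertex and its two incident arcs. Let $E$ be a shadow over two backbones with $m$ arcs; since it is a shadow there are no isolated vertices, so the rightmost vertex of $R$ carries exactly one arc, say $a_R$, and the leftmost vertex of $S$ carries exactly one arc, say $a_S$. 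After gluing, these two vertices merge into a single vertex incident to both $a_R$ and $a_S$. There are two cases: either $a_R\ne a_S$ (they merge into one vertex of degree two, i.e. a stack of size two in $\alpha(E)$ after the usual shadow-style identification), or $a_R=a_S$, meaning there was already an exterior arc from the last vertex of $R$ to the first vertex of $S$ (which after gluing becomes a loop — but in the shadow setting this is exactly the arc whose two endpoints coalesce, and I need to check this case is handled correctly, perhaps it cannot occur for shadows, or it corresponds to a degenerate configuration).

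Then I would track the genus. By the formula $2-2g-r = b - n$ from Section~2.2, genus is governed by the number of boundary components $r$. For an $A$-shadow both backbones lie on one boundary component; gluing the $3'$ end of $R$ to the $5'$ end of $S$ along that shared boundary component does not change $r$, and since $b$ drops from $2$ to $1$ while $n$ stays at $m$, one checks $g$ is preserved — so $\alpha(A)\in\mathcal{I}_{g,m}$, and moreover the image really is a shadow over one backbone (no isolated vertices appear, no new non-crossing arcs appear except possibly the newly-created size-two stack, which is precisely why the $\mathcal{D}$ part shows up). For a $B$-shadow the two backbones sit on (at least) two different boundary components; gluing merges two boundary components into one, so $r$ decreases by one, and the arithmetic $2-2g'-1 = 1-m$ versus $2-2g-2 = 2-m$ forces $g' = g+1$. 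Hence a $B$-shadow of genus $g-1$ maps to genus $g$; and the "extra" arc created at the glued vertex is exactly the inflated size-two stack, so $\alpha(B)$ is a $d$-shadow in $\mathcal{D}_{g,m}$ rather than a bare shadow in $\mathcal{I}_{g,m}$.

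For bijectivity I would construct the inverse explicitly: given a shadow $X$ over one backbone (element of $\mathcal{I}_{g,m}$) or a $d$-shadow $X$ (element of $\mathcal{D}_{g,m}$), cut the backbone at the appropriate vertex to recover two backbones. For $X\in\mathcal{I}_{g,m}$ the cut is made at the vertex lying in the "exterior" position relative to the nesting/crossing structure — I need to argue there is a canonical such vertex (the one determined by which boundary-component arc-end we glued along), so that $\alpha^{-1}$ is well-defined; for $X\in\mathcal{D}_{g,m}$ we cut at the degree-two vertex of the distinguished size-two stack and collapse the stack back to a single arc, landing in $\mathcal{B}_{g-1,m}$. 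The main obstacle will be verifying that the cut point is canonical — i.e. that an element of $\mathcal{I}_{g,m}$ does not arise from two genuinely different two-backbone shadows, and dually that every shadow over one backbone (and every $d$-shadow) actually does arise. This amounts to showing that the "where to cut the single backbone into two" is forced, which I expect follows from the rigidity of boundary components in the polygonal (collapsed-fatgraph) model: the gluing happened along a prescribed corner of the polygon, and that corner is recoverable from the fatgraph data of $X$ alone. Once well-definedness of the inverse is pinned down, checking $\alpha\circ\alpha^{-1}=\mathrm{id}$ and $\alpha^{-1}\circ\alpha=\mathrm{id}$ is routine from the case analysis above.
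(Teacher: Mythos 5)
Your proposal has several genuine gaps; the most serious is the bijectivity step. The map $\alpha$ without additional data is \emph{not} injective: a one--backbone shadow with $m$ arcs has $2m-1$ intervals at which the backbone can be cut, and each cut produces a different two--backbone shadow gluing back to the same $X$; likewise a $d$-shadow has two such cut intervals (the two $P$-intervals of its distinguished stack). So there is no ``canonical'' cut point recoverable from the fatgraph of $X$ alone, and the rigidity argument you hope for cannot exist. The paper resolves this by \emph{marking} the glue location: the codomain really consists of marked shadows and marked $d$-shadows, and the inverse is ``cut at the mark.'' This is exactly what later produces the count $(2m-1)i_g(m)+2(m-1)i_g(m-1)$ in the Appendix, so the marking is not a cosmetic detail but the content of the bijection. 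A quick cardinality check already kills the unmarked version: there are three two-backbone shadows on two crossing arcs (one per gap position), all of genus $0$ and type $B$, but only one element of $\mathcal{I}_{1,2}$ and none of $\mathcal{D}_{1,2}$.

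Two further errors. First, your splitting of the codomain is wrong: it is not true that $A$-shadows land in $\mathcal{I}_{g,m}$ and $B$-shadows in $\mathcal{D}_{g,m}$. The example above ($R=\{1,2\}$, $S=\{3,4\}$, arcs $(1,3),(2,4)$) is a $B$-shadow of genus $0$ whose image is the genus-$1$ shadow in $\mathcal{I}_{1,2}$, not a $d$-shadow. The $A/B$ dichotomy governs only the genus shift; whether the image is a shadow or a $d$-shadow depends on whether the two arcs flanking the former gap become an adjacent nested pair, and this is independent of the type. Second, your genus computation in the $A$-case asserts that $r$ is unchanged; substituting $r'=r$, $b'=1$, $n'=m$ into $2-2g-r=b-n$ yields $g'=g+\tfrac12$, a contradiction. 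Gluing along a single boundary component \emph{splits} it, so $r'=r+1$, and only then does $g'=g$ follow. (Smaller issues: gluing adds a backbone edge rather than merging the two end vertices --- a merged vertex would be incident to two arcs, violating the degree bound --- and the inverse on $\mathcal{D}_{g,m}$ must not collapse the distinguished stack, since both sides of the bijection have $m$ arcs.)
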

\begin{proof}
Given a genus $g$ shadow $s$ over two backbones we glue via $\alpha$ and 
mark the corresponding location where we glued. This generates either
a shadow over one backbone or a $d$-shadow with a mark, respectively.  
Furthermore, this operation is invertible. Namely, we can simply cut
the backbone at the marked point. 

It thus remains to consider the genera of the shadows involved.
Suppose first $s$ is a $A$-shadow, we will show that then $\alpha$ does
not change genus.
Indeed, gluing an $A$-shadow always splits a boundary 
component, whence the number of boundary components increases by one.
Evidently, the number of backbones decreases by one while the number of 
arcs does not change. Consequently, since $g'=(2+n-(r+1)-(b-1))/2=g$, 
the genus does not change.

Suppose next $s$ is a $B$-shadow. Then gluing will merge two boundary 
components. Thus, the number of boundary components decreases by one
and $g'=(2+n-(r-1)-(b-1))/2=g+1$ shows that the genus increases by one. 
As a result, 
\begin{displaymath}
\alpha\colon \mathcal{A}_{g,m} \mathaccent\cdot\cup\; \mathcal{B}_{g-1,m} 
\longrightarrow \mathcal{I}_{g,m} \mathaccent\cdot\cup\; \mathcal{D}_{g,m},
\end{displaymath}
is a bijection as stipulated.
\end{proof}

We furthermore have

\begin{thm}\label{T:finiteshadows}\citep{Huang:2bb}.
A shadow of genus $g\ge 0$ over two backbones has the following properties:\\
{\bf (a)} For $g\ge 1$ it contains at least $(2g+1)$ and at most $(6(g+1)-2)$ arcs; 
a shadow of genus $0$ has at least $2$ and at most $4$ arcs.
in particular, the set of such shadows is finite;\\
{\bf (b)} There exists at least one shadow over two
backbones with genus $g$ containing exactly $\ell$ arcs, where
\begin{equation}
 \ell =
\begin{cases}
(2g+1)\le \ell\le 6(g+1)-2 & \ \text{\rm for} \ g\ge 1, \\
2\le \ell \le 4            & \ \text{\rm for} \ g=0.
\end{cases}
\end{equation}
\end{thm}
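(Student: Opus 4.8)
The plan is to reduce both statements to known facts about shadows over \emph{one} backbone via the bijection of Lemma~\ref{L:bijection}, and then to handle the extremal cases by explicit construction. Recall that the shadows over one backbone are exactly the matchings counted by the classical genus-filtered theory, where a shadow of genus $g\ge 1$ has between $2g$ and $6g-2$ arcs (the upper bound coming from the one-vertex polygon-gluing / Harer–Zagier type constraints), and a genus-$0$ shadow over one backbone has exactly one arc. I would begin by stating these one-backbone bounds precisely as a lemma (or cite them from the same source that proves Lemma~\ref{L:bijection} is meaningful), since everything else is bookkeeping on top of them.

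For part \textbf{(a)}, the upper bound is the main work. I would argue as follows. Let $s$ be a shadow of genus $g$ over two backbones with $m$ arcs. Apply $\alpha$ as in Lemma~\ref{L:bijection}: this yields either a one-backbone shadow in $\mathcal{I}_{g',m}$ or a $d$-shadow in $\mathcal{D}_{g',m}$, where $g'\in\{g,g+1\}$ depending on whether $s$ is an $A$- or $B$-shadow. A one-backbone shadow in $\mathcal{I}_{g',m}$ has $m\le 6g'-2$, and a $d$-shadow in $\mathcal{D}_{g',m}$ is obtained from a one-backbone shadow with $m-1$ arcs of the same genus $g'$, so $m-1\le 6g'-2$, i.e.\ $m\le 6g'-1$. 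Since $g'\le g+1$ in the worst ($B$-shadow) case, this gives $m\le 6(g+1)-1$; the claimed bound is $6(g+1)-2$, so I must sharpen by one. The sharpening should come from observing that a $d$-shadow arising from $\alpha$ is never the \emph{worst-case} one-backbone shadow with an extra stacked arc: the marked gluing point imposes that one particular arc is a size-two stack, and a one-backbone shadow achieving $6g'-2$ arcs has all stacks of size one (it is itself a shadow), so inflating cannot occur there without dropping the arc count — more carefully, if $m-1 = 6g'-2$ then the underlying one-backbone object is already a shadow with maximal arc number and the $d$-shadow would have a stack of size two, contradicting that shadows have all stacks of size one unless the extra arc is accounted for by genus. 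I would make this precise by checking that $\mathcal{D}_{g',m}$ is nonempty only for $m \le 6g'-1$ but that the image of $B$-shadows of genus $g$ lands in $\mathcal{I}_{g+1,m}\cup\mathcal{D}_{g+1,m}$ with the extra constraint forcing $m\le 6(g+1)-2$. The lower bound $m\ge 2g+1$ is easier: an $A$-shadow of genus $g$ maps to a one-backbone shadow of genus $g$ (so $m\ge 2g$), but a one-backbone shadow with exactly $2g$ arcs is the unicellular maximal-genus matching, which is known to be ``rigid'' and in particular cannot be realized as a gluing of a two-backbone $A$-shadow at an arbitrary interior point, giving $m\ge 2g+1$; a $B$-shadow of genus $g$ maps into genus $g+1$ over one backbone, so $m\ge 2(g+1)$ or $m-1\ge 2(g+1)$, both of which exceed $2g+1$. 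The genus-$0$ case ($2\le m\le 4$) is finite and small, and I would simply verify it by hand from the one-backbone picture: genus-$0$ $A$-shadows correspond to one-backbone genus-$0$ shadows (one arc) or $d$-shadows thereof, but since two-backbone shadows have no non-crossing arcs and at least one exterior arc, the minimum is $2$; the maximum $4$ comes from enumerating the finitely many genus-$0$ two-backbone shadows directly.

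For part \textbf{(b)}, I would construct, for each admissible $\ell$, an explicit shadow of genus $g$ over two backbones with exactly $\ell$ arcs. The standard trick is to take a known extremal one-backbone shadow (which exists for every arc count in the admissible one-backbone range) and pull it back through $\alpha^{-1}$: by Lemma~\ref{L:bijection}, every one-backbone shadow of genus $g'$ with $\ell$ arcs, together with a choice of cut point, corresponds to a two-backbone shadow, whose type ($A$ or $B$) and genus ($g'$ or $g'-1$) are determined by whether the cut splits or merges boundary components. So the realizability statement over two backbones follows from the corresponding (known) realizability statement over one backbone, once I check that for each target pair $(g,\ell)$ in the stated range there is a one-backbone shadow together with a cut point producing exactly that pair. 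Concretely: for the $A$-shadow route I need a genus-$g$ one-backbone shadow with $\ell$ arcs and a boundary-splitting cut; for the $B$-shadow route a genus-$(g+1)$ one-backbone shadow with $\ell$ (or $\ell+1$) arcs and a boundary-merging cut. A clean way to organize this is to exhibit one ``base'' family — e.g.\ the shadow consisting of $g$ nested ``crossing pairs'' concatenated, plus a few extra arcs to tune the count — and verify the two backbones sit in one versus two boundary components as desired. I would also note that for the purposes of the later generating-function results only nonemptiness of $\mathcal{A}_{g,\ell}$ or $\mathcal{B}_{g,\ell}$ for the full range is needed, not a count, which keeps this step short.

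The main obstacle I anticipate is the off-by-one in the upper bound of part \textbf{(a)}: naively transporting the one-backbone bound through $\alpha$ gives $6(g+1)-1$, and closing the last unit requires genuinely using that the image of a $B$-shadow in $\mathcal{D}_{g+1,m}$ carries a forced size-two stack, so the underlying one-backbone shadow has $m-1$ arcs \emph{and} is not the generic maximal configuration at that location. I would isolate this as a small sub-lemma: \emph{a $d$-shadow in $\mathcal{D}_{g',m}$ has $m\le 6g'-3$ if its defining one-backbone shadow has genus $g'$ and the inflated arc is chosen arbitrarily} — or, failing that sharper bound, argue directly on the two-backbone side using $2-2g-r = 2-m$ together with $r\ge 1$ and a parity/boundary-walk argument bounding $r$ from below in terms of $g$. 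The genus-$0$ exceptional bounds ($2$ and $4$) I would simply dispatch by enumeration, since the relevant sets are tiny.
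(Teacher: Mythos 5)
Your overall strategy --- prove the one-backbone bounds first, then transport them to two backbones via the gluing map of Lemma~\ref{L:bijection}, handling the genus-$0$ case by direct enumeration --- is exactly the route the paper takes, and your treatment of the lower bound $2g+1$ (the genus-$g$ one-backbone shadow with $2g$ arcs has a single boundary component, so cutting it cannot preserve genus) matches the paper's one-line remark. You are in fact more honest than the paper about the upper bound: the paper simply says that Claim~$1$ ``guarantees that there are only finitely many such shadows and $d$-shadows, and the theorem follows,'' which establishes finiteness but never actually closes the gap between the naive bound $6(g+1)-1$ obtained through the $d$-shadow route and the stated $6(g+1)-2$.

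However, the specific sub-lemma you propose to close that gap is false. A $d$-shadow in $\mathcal{D}_{g',m}$ is by definition induced by a one-backbone shadow in $\mathcal{I}_{g',m-1}$, and Claim~$1$(b) guarantees $\mathcal{I}_{g',6g'-2}\neq\emptyset$; inflating any arc of such a maximal shadow produces an element of $\mathcal{D}_{g',6g'-1}$, so the bound $m\le 6g'-3$ (or even $6g'-2$) for $d$-shadows cannot hold. Nothing about the marked gluing point forbids the underlying shadow from being extremal; what you would actually have to show is that the two marked cut-points of such a maximal $d$-shadow always pull back to $A$-shadows of genus $g+1$ rather than $B$-shadows of genus $g$, which is a statement about which boundary component the two $P$-intervals of the inflated stack lie on, not about arc counts. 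Your fallback --- the direct boundary-walk argument on the two-backbone side using $n=2g+r$ (from $2-2g-r=2-n$) together with $\nu_1=0$ and an upper bound on the number of length-$2$ boundary components --- is the right way to get the sharp constant; note that over two backbones one needs $\nu_2\le 4$ (not $\nu_2\le 2$), which gives $2n\ge 3(r-4)+8$, hence $r\le 4g+4$ and $n\le 6g+4=6(g+1)-2$, consistent with the genus-$0$ shadow on four arcs where all four boundary components have length $2$. Two smaller points: a genus-$0$ shadow over one backbone is empty (a single arc is non-crossing), not a one-arc diagram, and indeed ${\bf I}_{2,A_0}=0$, so all genus-$0$ two-backbone shadows are of type $B$; and for part (b) the paper gives no explicit two-backbone construction at all, so your plan of pulling back the one-backbone family $S_{2g},\dots,S_{6g-2}$ through $\alpha^{-1}$ with a suitable choice of cut-point is a genuine addition, provided you verify for each $\ell$ that a cut of the required type (merging for $A$, splitting for $B$) exists.
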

\begin{proof}
First we recall an observation about shadows over one backbone \citep{Reidys:11a}.
shadows of genus $g\geq 1$ over one backbone have the following properties:

\emph{Claim $1$.}

{\bf (a)} A shadow of genus $g$ contains at least $2g$ and at most $(6g-2)$ 
arcs. In particular, for fixed $g$ there are only finitely many shadows;

{\bf (b)} For any $2g\le \ell\le 6g-2$, there exists a shadow of genus $g$ 
containing exactly $\ell$ arcs.

To prove this we note that if there is more than one boundary component, 
then there must be an arc with two different boundary components on its 
two sides. Removing this arc decreases $r$ by exactly one while preserving 
$g$ since the number of arcs is given by $n=2g+r-1$.
Furthermore, if there are $\nu_\ell$ boundary components of length
$\ell$ in the polygonal model, then $2n=\sum_\ell \ell\nu_\ell$ since
each side of each arc is traversed once by the boundary.  For a shadow,
$\nu_1=0$ by definition, and $\nu_2\leq 1$ as one sees directly.
Therefore $2n=\sum_\ell \ell\nu_\ell\geq 3(r-1)+2$, so
$2n=4g+2r-2\geq 3r-1$, \textit{i.e.}, $4g-1\geq r$.
Thus, we have $n=2g+(4g-1)-1=6g-2$, \textit{i.e.}, any shadow can contain at most
$(6g-2)$ arcs. The lower bound $2g$ follows directly from $n=2g+r-1$, since
$r\geq 1$.

Let $S_{2g}$ be a shadow containing $2g$ mutually crossing arcs,
\textit{i.e.}, each arc crosses any of the remaining $(2g-1)$ arcs. $S_{2g}$
has genus $g$ and contains a unique boundary component of length $4g$, 
\textit{i.e.}, traversing $4g$ non-backbone arcs counted with multiplicity.
We construct a new shadow $S_{2g+1}$ of genus $g$ containing $(2g+1)$ arcs,
by inserting an arc crossing into $S_{2g}$ from the $5'$ end of
$S_{2g}$ such that the boundary component in $S_{2g}$ splits into
one boundary component of length $3$ and another of length $4g+2-3=4g-1$.
The latter becomes the first boundary component of $S_{2g+1}$.
The newly inserted arc is by construction crossing, splits a boundary
component and preserves genus.
We now prove the assertion by induction of the number of inserted arcs.
By the induction hypothesis, there exists a shadow $S_{2g+i}$ of genus $g$ 
having $(2g+i)$ arcs, whose first boundary component has length $(4g-i)$.
Again, we insert a crossing arc as described above thereby splitting the 
first boundary component into one of length $3$ and the other of length 
$(4g-(i+1))$. After $i=4g-2$ such insertions, we arrive at a shadow 
whose first boundary component has length $2$ while all other boundary 
components have length $3$. Accordingly, there exists a set
$\{S_{2g},S_{2g+1},\ldots,S_{2g+(4g-2)}\}$ of shadows all having genus $g$, where
each $S_j$ contains $j$ arcs.

We finally observe that a shadow of genus $g=0$ over two backbones has at least 
$2$ arcs, while the maximum number of arcs contained in such a shadow is given by
$6(0+1)-2=4$.
For $g\ge 1$, it is impossible to cut a shadow of genus $g$ having $2g$ arcs and 
keep the genus. Thus the shadow of genus $g$ over two backbones has at least 
$(2g+1)$ arcs. By Lemma~(\ref{L:bijection}), We can always map an arbitrary shadow over 
two backbones of genus $g$ via $\alpha$ into a shadow over one backbone( of genus 
$g$ or $(g+1)$) or a $d$-shadow (of genus $g$). 
Claim $1$ guarantees that there are only finitely many such shadows 
and $d$-shadows, and the theorem follows.
\end{proof}
\subsection{Irreducibility}
\label{S:irreducible}
A diagram $E$ over $b$ backbones is called \emph{irreducible}, if it
is connected and for any two arcs, $\alpha_1,\alpha_k$, there exists a 
sequence of arcs 
\begin{displaymath}
 (\alpha_1,\alpha_2,\dots,\alpha_{k-1},\alpha_k),
\end{displaymath}
such that $(\alpha_i,\alpha_{i+1})$ are crossing. As proved in \citep{Huang:2bb}, 
we have the following corollary of Theorem (\ref{T:finiteshadows}).
\begin{cor}\label{C:finiteshadows}
An irreducible shadow having genus $g=0$ over two backbones contains at least
$2$ and at most $4$ arcs. For and $2\le \ell \le 4$, there exists an irreducible 
shadow of genus $g=0$ over two backbones having exactly $\ell$ arcs.
An irreducible shadow having genus $g\ge 1$ has the following properties:\\
{\bf (a)}
Every irreducible shadow with genus $g$ over two backbones contains at least
$(2g+1)$ and at most $(6(g+1)-2)$ arcs;\\
{\bf (b)}
For arbitrary genus $g$ and any $2g+1\le\ell\le 6g-2$, there exists an
\emph{irreducible} shadow of genus $g$ over one backbone having exactly $\ell$
arcs.
\end{cor}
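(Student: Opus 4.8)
The plan is to derive the corollary from Theorem~\ref{T:finiteshadows} together with the explicit construction used in its proof, the only new ingredient being the observation that the shadows produced there are \emph{irreducible}. Recall that, by the definition in Section~\ref{S:irreducible}, a diagram is irreducible iff it is connected \emph{and} the graph on its arc set having an edge between every pair of crossing arcs is connected; call the latter the crossing graph. With this in hand the arc-count \emph{bounds} require no work at all: every irreducible shadow is in particular a shadow, so the intervals $[2g+1,\,6(g+1)-2]$ for $g\ge 1$ and $[2,4]$ for $g=0$ are precisely Theorem~\ref{T:finiteshadows}(a) read off for the subclass of irreducible shadows, and finiteness for each fixed genus is inherited in the same way.

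For the existence statement (b) over one backbone, I would revisit the chain $S_{2g},S_{2g+1},\dots,S_{6g-2}$ constructed in the proof of Theorem~\ref{T:finiteshadows}, in which $S_j$ has genus $g$ and exactly $j$ arcs, and show by induction on the number of inserted arcs that every $S_j$ is irreducible. Connectedness of the underlying graph is automatic here, since all vertices lie on a single backbone, which is connected through its $B$-arcs; so only connectedness of the crossing graph is at stake. In the base case $S_{2g}$ consists of $2g\ge 2$ pairwise crossing arcs, so its crossing graph is complete, hence connected. In the inductive step $S_{2g+i+1}$ is obtained from $S_{2g+i}$ by inserting a single \emph{crossing} arc that splits a boundary component; this new arc therefore crosses at least one arc already present, so its vertex is adjacent to a vertex of the — inductively connected — crossing graph of $S_{2g+i}$, and connectedness is preserved. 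Since the arc counts and genera were already fixed in Theorem~\ref{T:finiteshadows}, this yields an irreducible one-backbone shadow of genus $g$ with exactly $\ell$ arcs for every $2g\le\ell\le 6g-2$, which contains the asserted range.

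It remains to produce irreducible genus-$0$ shadows over two backbones with $\ell=2,3,4$ arcs, which I would do by exhibiting explicit small diagrams, cf.~\citep{Huang:2bb}. For $\ell=2$, take backbones $R=(R_1,R_2)$ and $S=(S_1,S_2)$, drawn in the order $R_1,R_2,S_1,S_2$, with the two crossing exterior arcs $(R_1,S_1)$ and $(R_2,S_2)$: this diagram is connected, has no non-crossing arc and no stack of length $>1$; its collapsed fatgraph has two boundary components, so $r=2$ and $2-2g-r=b-n=0$ forces $g=0$; and with only two (crossing) arcs it is trivially irreducible. The cases $\ell=3$ and $\ell=4$ are settled by analogous explicit diagrams (for $\ell=4$ one takes a configuration in which the four exterior arcs form a single crossing chain, so that the crossing graph is connected).

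The one point that needs genuine care is the inductive step in (b): one must verify that the arc inserted at each stage of the construction in the proof of Theorem~\ref{T:finiteshadows} really crosses an arc already present — not merely that it ``splits a boundary component'' — since it is precisely this that keeps the crossing graph connected and thereby preserves irreducibility. Everything else is bookkeeping carried over from Theorem~\ref{T:finiteshadows} and its proof.
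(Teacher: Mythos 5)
Your proof is essentially correct for the statement as written, but be aware that the paper contains no proof of Corollary~\ref{C:finiteshadows} at all: it is imported from \citep{Huang:2bb} with the remark that it follows from Theorem~\ref{T:finiteshadows}, so there is no internal argument to compare against and what you have produced is a reconstruction of the missing proof. Your division of labour is the natural one. The arc-count bounds and finiteness are indeed inherited for free, since irreducible shadows form a subclass of shadows, so the only genuine content is existence; you obtain this by re-examining the witnesses $S_{2g},S_{2g+1},\dots,S_{6g-2}$ built in the proof of Theorem~\ref{T:finiteshadows} and checking that their crossing graphs are connected. That check is sound: the base case has a complete crossing graph on $2g\ge 2$ mutually crossing arcs, each insertion adds an arc that by construction crosses at least one arc of the inductively connected configuration, and connectedness of a one-backbone diagram is automatic via the $B$-arcs. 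Your genus computation for the two-arc, two-backbone example is also correct: $2-2g-r=b-n=0$ together with $r\ge 1$ forces $r=2$ and $g=0$.

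Two caveats. First, the genus-zero two-backbone cases $\ell=3,4$ are only asserted, not exhibited; they do exist (the paper's polynomial ${\bf I}_{0,2}(u)=3u^2+3u^3+u^4$ records $3$, $3$ and $1$ such shadows respectively), but a complete proof should display them and verify connectedness of their crossing graphs as you did for $\ell=2$. Second, part (b) as printed concerns shadows over \emph{one} backbone with range $2g+1\le\ell\le 6g-2$, and that is what you prove; if, as part (a) and the surrounding context suggest, the intended claim is existence over \emph{two} backbones up to $6(g+1)-2$ arcs, your argument does not yet reach it. One would then have to transport irreducibility through the cutting map inverse to $\alpha$ in Lemma~\ref{L:bijection}: cutting the glued backbone leaves the crossing graph untouched (the linear order of vertices, hence the set of crossing pairs, is unchanged), but connectedness of the resulting two-backbone diagram requires an exterior arc spanning the cut, and the upper end of the range forces one to use the $d$-shadows and the genus shift of that lemma rather than the one-backbone chain alone. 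Flagging which version of (b) you are proving would make the write-up airtight.
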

Let $X$ be a diagram.  We call $S'$ an \emph{irreducible shadow} of $X$ (irreducible
$X$-shadow) if $S'$ is an irreducible shadow and any arc in $S'$ is
contained in $X$. 
$S'$ is a \emph{$(g,b,m)$-shadow} if $S'$ is a diagram over $b$ backbones having
genus $g$ and $m$ arcs. 
The set of irreducible $(g,b,m)$-shadows is denoted by $\mathcal{I}_{g,b,m}$.
Let $\mathcal{I}_{g,b}=\cup_{m} \mathcal{I}_{g,b,m}$.
 
According to Corollary~(\ref{C:finiteshadows}), the generating function 
$${\bf I}_{g,b}(u)=\sum\,{\bf i}_{g,b}(m)u^m$$ of the combinatorial class
$\mathcal{I}_{g,b}$ is in fact a polynomial. 

The generating polynomials for ${\bf I}_{g,b}(u)$ for $0\leq g\leq 1$ and 
$1\leq b\leq 2$ are
\begin{eqnarray*}
{\bf I}_{1,1}(u)&=& u^2  +  2u^3+   u^4,\\
{\bf I}_{2,1}(u)&=& 17 u^4+160 u^5+566 u^6+1004 u^7+961 u^8+476 u^9+96 u^{10},\\
{\bf I}_{0,2}(u)&=& 3u^2+3u^3+u^4,\\
{\bf I}_{1,2}(u)&=& 11 u^3+137 u^4+656 u^5+1520 u^6+1951 u^7+1436 u^8+572 u^9\\
\ & \  & +96 u^{10}.      
\end{eqnarray*}

A diagram is a \emph{$\gamma$-structure} if it is connected and all its 
irreducible shadows have genus at most $\gamma$. A \emph{$\gamma$-structure} 
is called $\tau$-canonical if every stack in the structure have at least $\tau$ arcs.
A \emph{$\gamma$-matching} is a \emph{$\gamma$-structure} without isolated vertices.
The combinatorial class of $\gamma$-matchings over one backbone is denoted 
by $\mathcal{H}_{\gamma}$ with generating function ${\bf H}_{\gamma}(u)$. We have 
\begin{thm}\label{T:H(u)}\citep{Reidys:gamma}:
Let $R=\mathbb{Z}[u]$. Then ${\bf H}_{\gamma}(u)$, satisfies
\begin{equation}\label{E:stru}
{\bf H}_{\gamma}(u)^{-1}
 = 1- \left(u\,{\bf H}_{\gamma}(u)
 + {\bf H}_{\gamma}^{-1}(u)\sum_{g\le \gamma}\,
 {\bf I}_{g,1}\left(\frac{u\,{\bf H}_{\gamma}^{2}(u)}
 {1-u \,{\bf H}_{\gamma}^{2}(u)}\right) \right).
\end{equation}
Furthermore, eq.~(\ref{E:stru}) determines ${\bf H}_{\gamma}(u)$ uniquely.
In case of $\gamma=1$, the coefficients of   ${\bf H}_{1}(u)$ are 
asymptotically given by
\begin{eqnarray}
[z^n]{\bf H}_{1}(u) & \sim &  k \, n^{-3/2}\, \left(\rho^{-1}\right)^n,
\end{eqnarray}
in which $k$ is some positive constant and $\rho^{-1}\approx 8.28425$.
\end{thm}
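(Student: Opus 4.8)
\emph{Proof strategy.} The plan is to (i) derive eq.~(\ref{E:stru}) by a symbolic decomposition of $\gamma$-matchings over one backbone, (ii) deduce that it determines ${\bf H}_{\gamma}(u)$ uniquely by a coefficient-extraction argument in $\mathbb{Z}[[u]]$, and (iii) obtain the $\gamma=1$ asymptotics by singularity analysis of the algebraic function ${\bf H}_{1}(u)$.

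\emph{The functional equation.} A $\gamma$-matching over one backbone is a matching all of whose irreducible shadows have genus $\le\gamma$; by Corollary~(\ref{C:finiteshadows}) these shadows form a finite set enumerated by $\sum_{g\le\gamma}{\bf I}_{g,1}(u)$. I would decompose such a matching, read along the backbone, as a sequence of prime blocks, which accounts for the outermost $\tfrac{1}{1-\,\cdot\,}$ structure of eq.~(\ref{E:stru}). A prime block is of one of two kinds. Either (i) its set of maximal arcs is a single arc, a rainbow enclosing a nested $\gamma$-matching, contributing the term $u\,{\bf H}_{\gamma}(u)$; or (ii) its maximal arcs mutually cross, and collapsing stacks and deleting nested non-crossing arcs leaves an irreducible shadow $S$ of some genus $g\le\gamma$ with, say, $m$ arcs. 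The block is reconstructed from $S$ by inflating each of the $m$ arcs of $S$ into a stack of arbitrary positive length and placing nested $\gamma$-matchings in the resulting intervals and underneath. Carrying out this substitution arc-by-arc replaces each arc of $S$ by a factor $\tfrac{u\,{\bf H}_{\gamma}^{2}(u)}{1-u\,{\bf H}_{\gamma}^{2}(u)}=\sum_{k\ge1}u^{k}{\bf H}_{\gamma}(u)^{2k}$ — one $u$ per stacked arc and a pair of nested-matching slots per stacked arc — so that the block is counted by ${\bf I}_{g,1}\!\left(\tfrac{u\,{\bf H}_{\gamma}^{2}(u)}{1-u\,{\bf H}_{\gamma}^{2}(u)}\right)$, up to a global factor ${\bf H}_{\gamma}^{-1}(u)$ correcting for the nested-matching slots at the two ends of $S$ that are shared with the ambient sequence decomposition. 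Summing over $g\le\gamma$ and translating into generating functions (sequence $\mapsto\tfrac{1}{1-\,\cdot\,}$, substitution into the shadow polynomials $\mapsto$ composition) yields exactly eq.~(\ref{E:stru}).

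\emph{Uniqueness.} Write eq.~(\ref{E:stru}) as a fixed-point equation ${\bf H}_{\gamma}=\Psi(u,{\bf H}_{\gamma})$ in $\mathbb{Z}[[u]]$ under the normalization ${\bf H}_{\gamma}(0)=1$ (the empty matching). Every summand of $\Psi$ other than the constant $1$ carries at least one explicit factor of $u$: the term $u\,{\bf H}_{\gamma}$ trivially, and each ${\bf I}_{g,1}$-term because its argument $\tfrac{u\,{\bf H}_{\gamma}^{2}}{1-u\,{\bf H}_{\gamma}^{2}}$ has lowest-order term $u$ and ${\bf I}_{g,1}$ has zero constant term. Hence $[u^{n}]\Psi(u,{\bf H}_{\gamma})$ depends only on $[u^{0}]{\bf H}_{\gamma},\dots,[u^{n-1}]{\bf H}_{\gamma}$, so the coefficients of ${\bf H}_{\gamma}$ are forced recursively; since the combinatorial class $\mathcal{H}_{\gamma}$ does have a generating function satisfying the equation, that series is ${\bf H}_{\gamma}(u)$.

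\emph{Asymptotics for $\gamma=1$.} Now $\sum_{g\le1}{\bf I}_{g,1}(u)$ is a polynomial, so substituting the rational argument into it and clearing the denominator $1-u\,{\bf H}_{1}^{2}$ turns eq.~(\ref{E:stru}) into a polynomial relation $P(u,{\bf H}_{1})=0$; thus ${\bf H}_{1}(u)$ is algebraic, analytic at $0$, with ${\bf H}_{1}(0)=1$ and nonnegative coefficients. I would then apply the standard machinery for coefficient asymptotics of algebraic functions. The dominant singularity $\rho$ is the least positive solution of the branch-point system $P(u,y)=0$, $\partial_{y}P(u,y)=0$; checking that it is a simple branch point with $\partial_{u}P\neq0$ there gives the local expansion ${\bf H}_{1}(u)=a-b\sqrt{1-u/\rho}+O(1-u/\rho)$ with $b>0$, and the transfer theorem of Flajolet and Odlyzko then yields $[u^{n}]{\bf H}_{1}(u)\sim k\,n^{-3/2}\,\rho^{-n}$ with $k=\tfrac{b}{2\sqrt{\pi}}\,\rho^{-1/2}>0$. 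Resolving the branch-point system numerically — e.g.\ via the resultant of $P$ and $\partial_{y}P$ in $y$ — gives $\rho^{-1}\approx8.28425$. The step I expect to be the main obstacle, together with getting the bookkeeping of nested slots in the decomposition above exactly right, is verifying that $\rho$ is the \emph{unique} dominant singularity: that ${\bf H}_{1}$ is aperiodic and that the Puiseux branch representing it is not cut off by a pole or a confluence of branches before $\rho$. This is handled by a Pringsheim–positivity argument on the coefficients of ${\bf H}_{1}$ combined with the observation that $\partial_{y}P$ does not vanish along the analytic continuation of ${\bf H}_{1}$ on the real interval $(0,\rho)$.
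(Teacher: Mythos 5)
There is nothing in the paper to compare against: Theorem~\ref{T:H(u)} is imported verbatim from \citep{Reidys:gamma} and used as an input to the two-backbone results, so the paper gives no proof of it. Your outline is nonetheless the standard argument of the cited source, and it is sound: the decomposition of a $\gamma$-matching as a sequence of blocks gives ${\bf H}_{\gamma}^{-1}=1-{\bf B}$ with ${\bf B}=u{\bf H}_{\gamma}+{\bf H}_{\gamma}^{-1}\sum_{g\le\gamma}{\bf I}_{g,1}(\theta)$, $\theta=\frac{u{\bf H}_{\gamma}^2}{1-u{\bf H}_{\gamma}^2}$; uniqueness follows from the coefficient recursion exactly as you say; and the square-root singularity plus transfer gives the $n^{-3/2}\rho^{-n}$ asymptotics. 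The one place your bookkeeping is off (and which you flagged) is the origin of the factor ${\bf H}_{\gamma}^{-1}$: inflating a shadow arc to a stack of size $k$ yields $u^k$ and only $2(k-1)$ $P$-interval slots, i.e.\ $\bigl(u/(1-u{\bf H}_{\gamma}^2)\bigr)^m=\theta^m{\bf H}_{\gamma}^{-2m}$ over the $m$ arcs, while the $2m$ vertices of the shadow carry $2m-1$ $\sigma$-intervals contributing ${\bf H}_{\gamma}^{2m-1}$; the product is $\theta^m{\bf H}_{\gamma}^{-1}$, so the correction reflects the mismatch between $2m$ arc-ends and $2m-1$ intervals rather than slots shared with the ambient sequence. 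With that repair (and noting that in case (ii) the maximal arcs need only be crossing-connected, not mutually crossing), your reconstruction is essentially the proof in \citep{Reidys:gamma}.
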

The combinatorial classes of $\gamma$-matchings over two backbones 
is denoted by  $\mathcal{Q}_{\gamma}$. 
We call $\gamma$-structures over two backbones also $\gamma$-interaction 
structures. Then 
\begin{thm}\citep{Qinjing}\label{T:tbgamma}
The generating function of $\gamma$-matchings over two backbones, 
${\bf Q}_{\gamma}(u)$, satisfies
\begin{equation}\label{E:tbgamma}
{\bf Q}_{\gamma}(u)=
\frac{{\bf H}^2_{\gamma}(u)\left(u{\bf H}^2_{\gamma}(u)+\sum_{g\leq \gamma}{\bf I}_{g,2}
\left(\frac{u{\bf H}^2_{\gamma}(u)}{1-u}\right)\right)}{1-u{\bf H}^2_{\gamma}(u)-
\sum_{g\leq \gamma}{\bf I}_{g,2}\left(\frac{u{\bf H}^2_{\gamma}(u)}{1-u}\right)}.
\end{equation}
For $\gamma= 0,1$ the coefficients of ${\bf Q}_{\gamma}(u)$ are asymptotically 
given by $[u^n]{\bf Q}_{\gamma}(u)\sim k_{\gamma}(\delta_{\gamma}^{-1})^n$ for some 
constant $k_{\gamma}>0$. In particular, 
$\delta_0^{-1}\approx 5.4252$ and $\delta_1^{-1}\approx 8.7266$.
\end{thm}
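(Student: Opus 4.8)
The plan is to derive \eqref{E:tbgamma} from a canonical decomposition of $\gamma$-matchings over two backbones with respect to the product $\bullet$ of Section~\ref{S:surfaces}, and then to read off the asymptotics by singularity analysis of the resulting algebraic generating function. First I would decompose $E\in\mathcal{Q}_\gamma$. Every exterior arc straddles the gap, so in the glued linear picture any two exterior arcs either cross or are nested. Removing from $R$ the vertices preceding the leftmost endpoint of an exterior arc and from $S$ those following the rightmost one, one peels off two $\gamma$-matchings over one backbone --- a factor ${\bf H}_\gamma(u)^2$ --- and the remainder is a nonempty $\bullet$-product of $\bullet$-irreducible blocks, linearly ordered by the nesting of the exterior-arc cluster. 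Since $\bullet$ is associative with unit, this factorisation is unique; conversely, the irreducible shadows of a $\bullet$-product of blocks with one-backbone $\gamma$-matchings attached to the two free ends are precisely the irreducible shadows of the individual blocks and of the nested matchings --- crossing being local to a single block --- so such a configuration is again a $\gamma$-matching. Hence ${\bf Q}_\gamma(u)={\bf H}_\gamma(u)^2\cdot K(u)/\big(1-K(u)\big)$, where $K(u)$ counts $\bullet$-irreducible blocks.

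Next I would determine $K$. A $\bullet$-irreducible block is either a single exterior arc --- owning two $\sigma$-interval slots into which one-backbone $\gamma$-matchings are nested, contributing $u\,{\bf H}_\gamma(u)^2$ --- or a block whose shadow $S'$ is an irreducible shadow over two backbones of genus $g\le\gamma$ with $m$ arcs (recall $g(X)=g(sd(X))$, so the block has genus $g$); by Corollary~\ref{C:finiteshadows} there are only finitely many such $S'$, enumerated by ${\bf I}_{g,2}(u)$. Reconstructing the block from $S'$, each of its $m$ arcs is inflated into a stack of length $\ge1$ (factor $\tfrac{u}{1-u}$) carrying one one-backbone $\gamma$-matching in each of its two designated $\sigma$-intervals (factor ${\bf H}_\gamma^2$), the two extreme slots being already absorbed into the ${\bf H}_\gamma^2$ prefactor; so each arc of $S'$ contributes $\tfrac{u\,{\bf H}_\gamma^2}{1-u}$ and a genus-$g$ shadow with $m$ arcs contributes $[x^m]{\bf I}_{g,2}(x)\cdot\big(\tfrac{u\,{\bf H}_\gamma^2}{1-u}\big)^m$. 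Summing,
\[
K(u)=u\,{\bf H}_\gamma(u)^2+\sum_{g\le\gamma}{\bf I}_{g,2}\!\left(\frac{u\,{\bf H}_\gamma(u)^2}{1-u}\right),
\]
and substituting this into ${\bf Q}_\gamma={\bf H}_\gamma^2\,K/(1-K)$ and clearing denominators reproduces \eqref{E:tbgamma}.

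For the asymptotics with $\gamma\in\{0,1\}$, the polynomials ${\bf I}_{g,2}$ are explicit (listed above for $g\le1$) and ${\bf H}_\gamma$ is algebraic: from \eqref{E:stru} one obtains ${\bf H}_0(u)=\tfrac{1-\sqrt{1-4u}}{2u}$, while ${\bf H}_1(u)$ is the algebraic function of Theorem~\ref{T:H(u)}, analytic for $|u|<\rho$ with $\rho\approx1/8.28425$; hence ${\bf Q}_\gamma(u)$ is algebraic. Its dominant singularity $\delta_\gamma$ is the smallest positive root of $1-K(u)=0$, i.e. of $1-u{\bf H}_\gamma^2(u)-\sum_{g\le\gamma}{\bf I}_{g,2}\!\big(\tfrac{u{\bf H}_\gamma^2(u)}{1-u}\big)=0$; one checks this root is simple, lies strictly inside the disc $|u|<\min\{1,\rho\}$ on which $\tfrac{1}{1-u}$ and ${\bf H}_\gamma$ are analytic, and is the unique singularity of ${\bf Q}_\gamma$ of minimal modulus. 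Thus ${\bf Q}_\gamma$ has a simple pole at $\delta_\gamma$ and, by the transfer theorems, $[u^n]{\bf Q}_\gamma(u)\sim k_\gamma\,\delta_\gamma^{-n}$ with $k_\gamma>0$ the associated residue; evaluating $\delta_\gamma$ numerically yields $\delta_0^{-1}\approx5.4252$ and $\delta_1^{-1}\approx8.7266$.

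The main obstacle is the bijection underlying the first two steps. One must verify that every $\bullet$-irreducible block is precisely a single exterior arc or an inflated irreducible two-backbone shadow that carries exactly one one-backbone $\gamma$-matching in each of its two, respectively $2m$, designated $\sigma$-intervals and no residual exterior arc --- so that no recursive copy of ${\bf Q}_\gamma$ hides inside a block --- and that nested chains and stacks of exterior arcs are recovered as $\bullet$-products of single-arc blocks rather than being counted twice. Managing the interplay of the gap, interior versus exterior arcs, and the shadow projection is the delicate part; once it is settled, the remaining generating-function algebra and the singularity analysis are routine.
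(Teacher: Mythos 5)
You should first note that the paper does not actually prove Theorem~\ref{T:tbgamma}: it is imported verbatim from \citep{Qinjing}, so there is no in-paper proof to match against. The closest internal analogue is the proof of Theorem~\ref{T:mainthm} (and its $\gamma$-restricted form, Corollary~\ref{C:gamma}), and your overall architecture --- peel off two one-backbone matchings, decompose the exterior-arc skeleton as a nonempty $\bullet$-sequence of blocks, each block being a single exterior arc or an inflated irreducible two-backbone shadow, and sum the geometric series ${\bf H}_\gamma^2\,K/(1-K)$ --- is exactly the architecture the paper uses there (its classes ${\bf Q}_0$, ${\bf Q}_1$, ${\bf T}_p$ play the role of your $K^p$; the genus bookkeeping via $A$- versus $B$-shadows is absent in the univariate statement, as you correctly anticipate). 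Your singularity analysis is also the standard one and is consistent with the numerics ($\delta_\gamma$ lies strictly inside the disc of analyticity of ${\bf H}_\gamma$ for $\gamma=0,1$).

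The genuine gap is in your justification of the inner substitution $\frac{u{\bf H}_\gamma^2(u)}{1-u}$. Your per-arc construction --- inflate a shadow arc into a stack (factor $\tfrac{u}{1-u}$, with all $P$-intervals empty) and attach ${\bf H}_\gamma^2$ to two designated $\sigma$-intervals --- never produces matchings in which the lift of a single shadow arc is a broken stack, i.e.\ two or more parallel arcs separated by nonempty one-backbone matchings sitting in $P$-intervals. Such configurations still collapse to the same single shadow arc (the shadow removes non-crossing arcs \emph{before} collapsing stacks), so they belong to the same block and must be counted. The paper's proof of Theorem~\ref{T:mainthm} handles precisely these via its ``sequence of induced exterior arcs'' (Steps~1--2), which yields the per-arc factor $\tfrac{u}{1-u{\bf C}(u,t)^2}$ and hence the substitution $\tfrac{u{\bf C}^2}{1-u{\bf C}^2}$; at $t=1$ Corollary~\ref{C:gamma} therefore gives the argument $\tfrac{u{\bf H}_\gamma^2}{1-u{\bf H}_\gamma^2}$, which disagrees with the $\tfrac{u{\bf H}_\gamma^2}{1-u}$ appearing in equation~(\ref{E:tbgamma}) unless the polynomials ${\bf I}_{g,2}$ are normalized differently. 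So as written your argument undercounts, and you cannot simply assert that each block is ``an inflated irreducible shadow with exactly one matching in each designated $\sigma$-interval and no residual exterior arc'': you must either (i) redo the per-arc inflation as a sequence of induced arcs, arriving at $\tfrac{u{\bf H}_\gamma^2}{1-u{\bf H}_\gamma^2}$ and thereby at the formula of Corollary~\ref{C:gamma} rather than at (\ref{E:tbgamma}) as stated, or (ii) identify the alternative block/slot convention under which $\tfrac{u{\bf H}_\gamma^2}{1-u}$ is the correct substitution and reconcile it with Corollary~\ref{C:gamma}. This reconciliation is the missing idea, not a routine detail.
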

\section{Irreducible shadows and genus filtration} \label{S:Qmatching}
For shadows and matchings over one backbone, we have the bivariate generating function 
of irreducible shadows filtered by genus $g$ and arcs number $n$ denoted by 
\begin{equation}
{\bf I}(u,t)=\sum_{g\geq 1}{\bf I}_g(u)t^g=\sum_{g\geq 1}\sum_{n=2g}^{6g-2}i_g(n)u^nt^g.
\end{equation} 
We denote the class of all the matchings over $1$-backbone by $\mathcal{C}$.
Let furthermore $c_g(n)$ denote the number of matchings of genus $g$ with $n$ arcs
and
\begin{equation}
{\bf C}_g(u)=\sum_{m\geq 2g} c_g(n)u^n.
\end{equation}
Then the bivariate generating function of matchings filtered by genus $g$ and arc number $n$
is 
\begin{equation}
{\bf C}(u,t)=\sum_{g\geq 0}{\bf C}_g(u)t^g=\sum_{g\geq 0}\sum_{n\geq 2g}c_g(n)u^nt^g.
\end{equation}
In case of two backbones, By distinguishing $A$-shadows and $B$-shadows. 
We denote the bivariate generating polynomials, by
${\bf I}_{2,A}(u,t)$ and ${\bf I}_{2,B}(u,t)$:
\begin{align}
 & {\bf I}_{2,A}(u,t)=\sum_{g\geq 0}{\bf I}_{{2,A}_g}(u)t^g =
   \sum_{g\geq 1} \sum_{n=2g+1}^{6(g+1)-2}i_{{2,A}_g}(n)u^nt^g +
   \sum_{n=2}^{n=4}i_{{2,A}_0}(n)u^n, \\
 &{\bf I}_{2,B}(u,t)=\sum_{g\geq 0}{\bf I}_{{2,B}_g}(u)t^g =\sum_{g\geq 1}
 \sum_{n=2g+1}^{6(g+1)-2}i_{{2,B}_g}(n)u^nt^g +\sum_{n=2}^{n=4}i_{{2,B}_0}(n)u^n. 
\end{align} 
We furthermore denote the set of all the matchings over $2$-backbones by
$\mathcal{Q}$ and by $q_g(n)$ the number of matchings over two backbones of
genus $g$ with $n$ arcs. Then let
\begin{displaymath}
 {\bf Q}_g(u)=
\begin{cases}
\sum_{n\geq 2g+1} q_g(n)u^n  & \text{\rm for }  g\geq 1, \\
\sum_{n\geq 2} q_0(n)u^n     & \text{\rm for }  g=0.
\end{cases}
\end{displaymath}
The bivariate generating function of matchings over two backbones filtered by 
genus $g$ and arc number $n$ is ${\bf Q}(u,t)=\sum_{g\geq 0}{\bf Q}_g(u)t^g$.
The central observation is that ${\bf Q}(u,t)$ can be expressed via 
irreducible shadows as follows:

\begin{thm}\label{T:mainthm}
The generating functions ${\bf Q}(u,t)$,${\bf C}(u,t)$, ${\bf I}_{2,A}(u,t)$ and \\
${\bf I}_{2,B}(u,t)$ 
satisfy 
\begin{equation}\label{E:mainthm}
{\bf Q}(u,t)=\frac{{\bf C}(u,t)^2\left({\bf I}_{2,A}+{\bf I}_{2,B}-t{\bf I}_{2,B}^2-
 {\bf I}_{2,A}{\bf I}_{2,B}+u{\bf C}(u,t)^2(1-{\bf I}_{2,B})\right)}
{(1-t{\bf I}_{2,B})\left(1-u{\bf C}(u,t)^2-{\bf I}_{2,A}-t{\bf I}_{2,B}\right)},
\end{equation}
where ${\bf I}_{2,A}={\bf I}_{2,A}\left(\frac{u{\bf C}
(u,t)^2}{1-u{\bf C}(u,t)^2},t \right)$ 
and  
${\bf I}_{2,B}={\bf I}_{2,B}\left(\frac{u{\bf C}
(u,t)^2}{1-u{\bf C}(u,t)^2},t \right)$. 
\end{thm}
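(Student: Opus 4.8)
The plan is to build ${\bf Q}(u,t)$ by a structural decomposition that mirrors the univariate computation behind Theorem~(\ref{T:tbgamma}), but now tracking genus as a separate formal variable $t$. First I would set up the substitution dictionary: every diagram over one or two backbones is recovered from its shadow by inflating each arc of the shadow into a stack (of length $\ge 1$) and then, in each of the resulting intervals, inserting an arbitrary matching over one backbone. On generating-function level, inflating an arc of a one-backbone irreducible shadow to a stack and filling the two induced intervals replaces the arc-marking variable $u$ by $\frac{u\,{\bf C}(u,t)^2}{1-u\,{\bf C}(u,t)^2}$, since a stack of length $\ge 1$ contributes $\frac{u}{1-u}$ in its base variable after each of its $P$-intervals and flanking $\sigma$-intervals is loaded with a one-backbone matching (contributing ${\bf C}(u,t)$ per interval, two intervals per arc). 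This explains the arguments of ${\bf I}_{2,A}$ and ${\bf I}_{2,B}$ in the statement; the genus variable $t$ passes through untouched because inflating an arc to a stack and inserting planar pieces does not change genus (as recorded in Section~\ref{S:surfaces} and Section~\ref{S:diagrams}).

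Next I would decompose a general matching over two backbones along its maximal arcs and its cut vertex, exactly as in the proof of Theorem~(\ref{T:tbgamma}), but keeping the $A/B$ distinction because that is precisely what controls the genus contribution of the exterior part. The key combinatorial claim is: a two-backbone matching is obtained by choosing a (possibly empty) "exterior core" — an irreducible two-backbone shadow, inflated via the substitution above — and then attaching one-backbone matchings along the two backbones and in each interval; moreover these exterior cores compose by the $\bullet$-product of Section~\ref{S:surfaces}, which is where the geometric series $\frac{1}{1-\cdots}$ in the denominator comes from. The subtlety is that concatenating two exterior cores via $\bullet$ adds their genera, so an $A$-core followed by an $A$-core is again type $A$ with added genus, an $A$-core followed by a $B$-core is type $B$, a $B$-core followed by anything becomes $B$, and — crucially — composing a $B$-core with another $B$-core \emph{merges two boundary components and so adds one extra unit of genus} beyond the sum, which is the source of the isolated factor $t$ attached to each ${\bf I}_{2,B}$ and of the $(1-t\,{\bf I}_{2,B})^{-1}$ factor. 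I would make this precise with a transfer-matrix / pointing argument on the sequence of exterior cores read left to right along the backbones, classifying the running type as $A$ or $B$ and bookkeeping the genus jump; solving the resulting $2\times 2$ rational system gives the bracketed numerator and the product denominator in eq.~(\ref{E:mainthm}). The $u{\bf C}(u,t)^2$ terms account for the degenerate "core" consisting of a single exterior arc inflated to a stack with a matching on each side, which behaves like a $B$-type block of genus $0$ (one exterior arc alone keeps both backbones in one component only after the $\alpha$-glue, hence the $(1-{\bf I}_{2,B})$ and $u{\bf C}(u,t)^2(1-{\bf I}_{2,B})$ contributions).

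Finally I would verify the two boundary cases as a consistency check: setting $t=1$ must collapse eq.~(\ref{E:mainthm}) to eq.~(\ref{E:tbgamma}) under ${\bf C}(u,1)={\bf H}_\infty(u)$-style identifications restricted to the relevant genus range, and the purely non-crossing case (all shadows trivial) must reproduce the one-backbone-squared bookkeeping. The main obstacle, and the step I would spend the most care on, is the genus accounting for the $B$-type concatenations: proving rigorously that $\bullet$-composing a $B$-shadow into the gap of another $B$-shadow increases $r$ by exactly $-1$ (i.e., merges exactly two boundary components) and hence increases genus by exactly one relative to the sum, and that every other type-pair composition is genus-additive with no correction. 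This is an Euler-characteristic computation of the same flavor as the two cases in the proof of Lemma~(\ref{L:bijection}) — one tracks $v-e+r$ across the gluing — but it must be checked for the full composition $E_1\bullet E_2$ rather than for a single $\alpha$-glue, and it is what pins down the exact placement of the $t$'s in the formula. Once that lemma is in hand, assembling the generating function is a routine solution of a linear system, and uniqueness follows because all series involved lie in $\mathbb{Z}[u][[t]]$ with no constant term obstructing inversion.
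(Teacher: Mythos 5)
Your overall architecture is the paper's: decompose a two-backbone matching along the chain of nested irreducible exterior shadows, inflate each shadow arc via the substitution $u\mapsto \frac{u{\bf C}(u,t)^2}{1-u{\bf C}(u,t)^2}$, and sum a geometric-type series over the length of the chain. The gap is in the one step you yourself flag as critical: the genus bookkeeping for the $\bullet$-composition. Your rule --- genus is additive except for an extra $+1$ for each $B$-core immediately composed with another $B$-core (with free exterior stacks treated as genus-$0$ $B$-blocks) --- is not the rule that produces eq.~(\ref{E:mainthm}), and it contradicts the paper's accounting in concrete cases. The correct rule (Lemma~1 of \citep{Huang:2bb}, as used in the paper's proof) is global, not pairwise: an $A$-shadow always contributes its genus additively; a $B$-shadow contributes $g(\phi_B)+1$ whenever the structure contains \emph{any} exterior arc not belonging to that shadow (regardless of whether the neighbouring block is of type $A$, type $B$, or a free exterior stack, and regardless of adjacency), and the single correction is a global $-1$ exactly when the exterior arcs are generated exclusively by $B$-shadows. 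Two counterexamples to your rule: (i) one $A$-shadow nested with one $B$-shadow gives an extra $+1$ in the paper (the term $\sum g(A_i)+\sum(g(B_i)+1)$) but $+0$ under your rule, since there is no $B$--$B$ adjacency; (ii) a single $B$-shadow flanked by free exterior stacks on both termini gives $+1$ in the paper (the single factor $t$ in $t{\bf A}(u,t)^2$ inside ${\bf W}_{\phi_B}$) but $+2$ under your rule, since you would count two $B$--$B$ adjacencies. Solving your $2\times 2$ transfer system with these weights therefore yields a different rational function; in particular you cannot recover the characteristic numerator terms $-t{\bf I}_{2,B}^2-{\bf I}_{2,A}{\bf I}_{2,B}$ together with the factor $(1-t{\bf I}_{2,B})^{-1}$, which in the paper arise from deliberately over-counting (every $B$-shadow gets a factor $t$, giving the series ${\bf T}_p$) and then subtracting a correction series ${\bf K}_p$ supported only on the pure-$B$ chains, with ${\bf K}_1=(1-t){\bf I}_{2,B}{\bf C}(u,t)^2$ and ${\bf K}_p=t{\bf I}_{2,B}{\bf K}_{p-1}$.

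A secondary caution: your proposed consistency check at $t=1$ against eq.~(\ref{E:tbgamma}) will not close cleanly, since that formula carries the argument $\frac{u{\bf H}_\gamma^2(u)}{1-u}$ rather than $\frac{u{\bf H}_\gamma^2(u)}{1-u{\bf H}_\gamma^2(u)}$, so it is not a reliable anchor for pinning down your weights. To repair the proof you should replace the pairwise adjacency rule by the global rule above (proved, as you suggest, by tracking boundary components across the two backbone concatenations of $E_1\bullet E_2$ --- but noting that the outcome depends on whether the relevant boundary components of \emph{each} factor coincide, not merely on the type of the adjacent block), and then organize the summation as over-count plus pure-$B$ correction rather than as a transfer matrix.
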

\begin{proof}
Let $s$ be an arbitrary $\mathcal{Q}$-matching. Consider the set of irreducible 
shadows which contain at least one exterior $s$-arc, $Sh_2(s)$. There exists exactly 
one element in $Sh_2(s)$, consisting of maximal arcs, \textit{i.e.}~all elements in 
$Sh_2(s)$ are nested via $\bullet$-product.

Our first goal shall be the computation of the generating function of two backbone 
matchings containing exactly $p$, distinct, nested shadows, ${\bf Q}_p(u,t)$.
 
{\emph{Claim $0$}.}  ${\bf Q}_0(u,t)=\frac{u{\bf C}(u,t)^4}{1-u{\bf C}(u,t)^2}$.

To prove Claim $0$, we note that here the two backbones are connected by at least 
$1$ exterior arc that does not belong to any irreducible $2$-shadow. 
If there are $l\geq 1$ such exterior arcs, these form an exterior stack. 
Let $s^*$ denote the particular $\mathcal{A}_l$-matching over $[2l]$ consisting 
of $l$ non-crossing exterior arcs. Any $\mathcal{A}_l$-matching, $s$,
can be obtained from $s^*$ in three steps, see Fig.~\ref{F:A3}:
\begin{figure}[!ht]
 \begin{center}
 \includegraphics[width=0.7\textwidth]{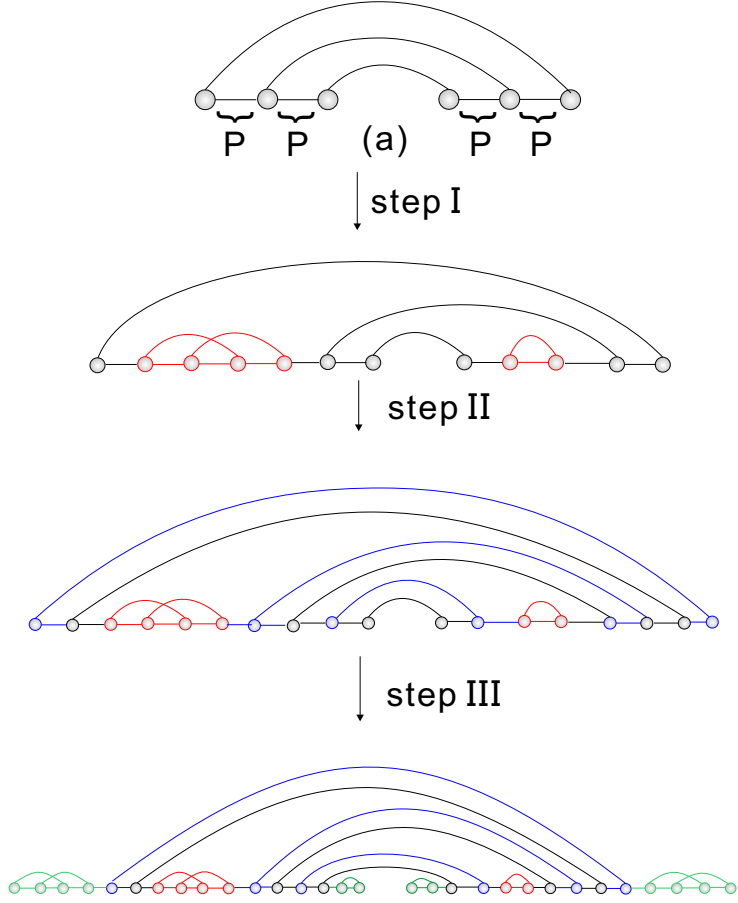}
 \end{center}
\caption{\small (a) is a $\mathcal{A}_3$ matching, 
consisting of $3$ exterior arcs.
Step {\bf I}: Insert non-empty matchings into the $P$ intervals. 
Step {\bf II}: Inflate each exterior arc to a stack. 
Step {\bf III}: Concatenation on each end of the (two) backbones.
}\label{F:A3}
 \end{figure}

({\bf I}) Insert non-empty $1$-backbone matchings into at least one of 
the two intervals of each pair of $([i,i+1],[2l-i,2l-i+1])$. 
Since there are $(l-1)$ such intervals in $s^*$, we obtain 
$\mathcal{U}^l \times (\mathcal{C}^2-\mathcal{E})^{l-1}$;

({\bf II}) Inflate each exterior arc in the derived matching into an exterior stack.
Since there are in total $l$ exterior arcs after Step (\textbf{I}), we obtain 
$\mathcal{N}_l=(\mathcal{U}\times SEQ(\mathcal{U}))^l 
\times (\mathcal{C}^2-\mathcal{E})^{l-1}$,
where $SEQ(\mathcal{U})=\mathcal{E}+\mathcal{U}+\mathcal{U}^2+\cdots$; 

({\bf III}) Concatenate each end of the (two) backbones with a (possible empty) 
$1$-backbone matching, \textit{i.e.}~$\mathcal{C}^4 \times \mathcal{N}_l$.

Accordingly, 
\begin{equation}
 \mathcal{A}_l=\mathcal{C}^4\times (\mathcal{U}\times SEQ (\mathcal{U}))^l \times 
(\mathcal{C}^2-\mathcal{E})^{l-1},
\end{equation}
for some $l\geq 1$. If the matching contains no $2$-backbone shadows, in step ({\bf I}) and 
({\bf III}) 
the genus increases by the genus of the added $1$-backbone matchings. In step ({\bf II}) the genus 
does not change. Then by summing over all $l\geq 1$:
\begin{equation}
 {\bf Q}_0(u,t)=\frac{u{\bf C}(u,t)^4}{1-u{\bf C}(u,t)^2},
\end{equation}
as claimed.

\emph{Claim $1$.}

\begin{align}
\nonumber {\bf Q}_{1}(u,t)= &{\bf I}_{2,A}\left(\frac{u{\bf C}
(u,t)^2}{1-u{\bf C}(u,t)^2},t \right)\frac{({\bf A}(u,t)+{\bf C}
(u,t)^2)^2}{{\bf C}(u,t)^2}\\
&+{\bf I}_{2,B}\left(\frac{u{\bf C}(u,t)^2}{1-u{\bf C}(u,t)^2},t
\right)\frac{t{\bf A}(u,t)^2+2t{\bf A}(u,t){\bf C}(u,t)^2 
+{\bf C}(u,t)^4}{{\bf C}(u,t)^2}.
\end{align}
Let $\phi$ be a fixed irreducible shadow of genus $g$, having $m$ arcs and let 
$\mathcal{W}_{\phi}$ be the set of matchings over two backbones, $s$, that contain 
only $\phi$ as shadow. In the following we shall construct such 
$\mathcal{W}_{\phi}$-matchings, see Fig.~\ref{F:1-structure}.
\begin{figure}[!ht]
 \begin{center}
 \includegraphics[width=0.7\textwidth]{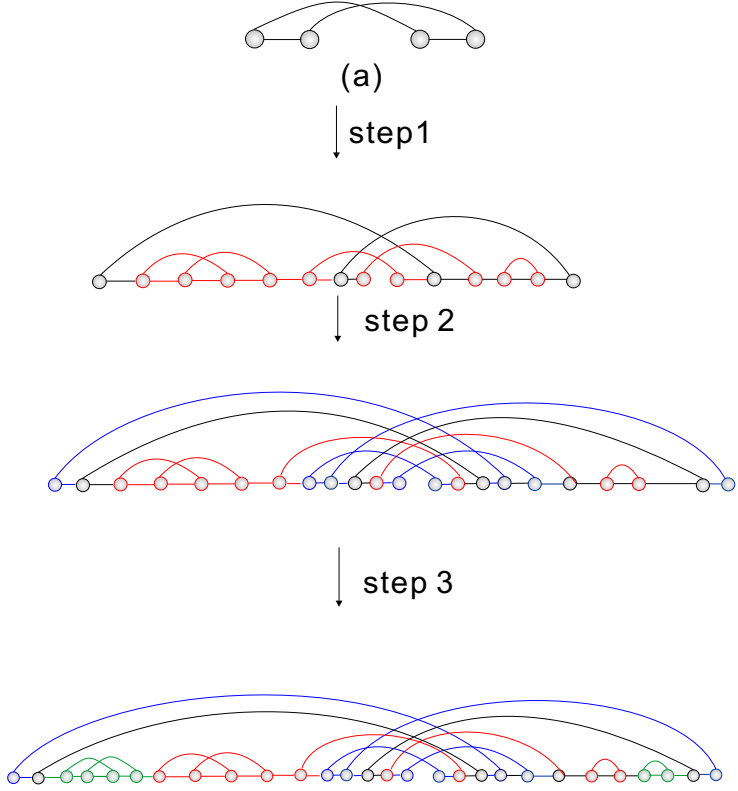}
 \end{center}
 \caption{\small (a) is an irreducible shadow of genus $0$ with $2$ arcs; 
step 1: Inflate each arc in the shadow to a sequence of induced exterior arcs (red arcs);
step 2: Inflate each exterior arc to a stack (blue arcs); 
step 3: Insert a $\mathcal{C}$-matching into the $2$ $\sigma$-intervals (green arcs).
 }\label{F:1-structure}
 \end{figure}
Let $\mathcal{AX}$ be the class of all possible diagrams such that 
$d \in \mathcal{AX}$ is either a $\mathcal{A}$-matching or a $\mathcal{X}$-matching,
where $\mathcal{X}$-matching is $\mathcal{C}\times \mathcal{C}$. That is,
$\mathcal{AX}=\mathcal{A}+\mathcal{X}$.

$\textbf{Step 1}:$ 
Inflate each arc in $\phi$ into a sequence of induced exterior arcs, \textit{i.e.}~an exterior 
arc together with at least one non-trivial matching over one backbone in either one or in
both $P$-intervals, \textit{i.e.}~
\begin{align}
\mathcal{N}=\mathcal{R} \times 
\left((\mathcal{C}-1)+(\mathcal{C}-1)+(\mathcal{C}-1)^2\right)=\mathcal{R} 
\times (\mathcal{C}^2-1).
\end{align}  
Clearly, we have ${\bf N}(u,t)=
u\left({\bf C}(u,t)^2-1\right)$(for a single induced exterior arc), 
Furthermore, for a sequence of induced arcs
$\mathcal{M}=SEQ(\mathcal{N})$, we have
\begin{align}
{\bf M}(u,t)=\frac{1}{1-u\left({\bf C}(u,t)^2-1\right)}.
\end{align}
Inflating each exterior arc into a sequence of induced arcs, $\mathcal{R}^m \times 
\mathcal{M}^m$, leads to
\begin{align}
u^m {\bf M}(u,t)^m=\left(\frac{u}{1-u({\bf C}(u,t)^2-1)}\right)^m.
\end{align}
\textbf{Step 2:} Inflate each exterior arc into a stack. The corresponding 
generating function is 
\begin{align}
\left(\frac{\frac{u}{1-u}}{1-\frac{u}{1-u}\left({\bf C}(u,t)^2-1\right)}\right)^m =
\left(\frac{u}{1-u {\bf C}(u,t)^2}\right)^m.
\end{align}
\textbf{Step 3:} Insert a $\mathcal{C}$-matching into the respective
$(2m-2)$ $\sigma$-intervals of $\phi$. The corresponding generating function is
${\bf C}(u,t)^{2m-2}$.

We thus arrive at
\begin{align}
t^g\left(\frac{u}{1-u{\bf C}(u,t)^2}\right)^m {\bf C}(u,t)^{2m-2}.
\end{align}
\begin{figure}[!ht]
\begin{center}
\includegraphics[width=0.75\textwidth]{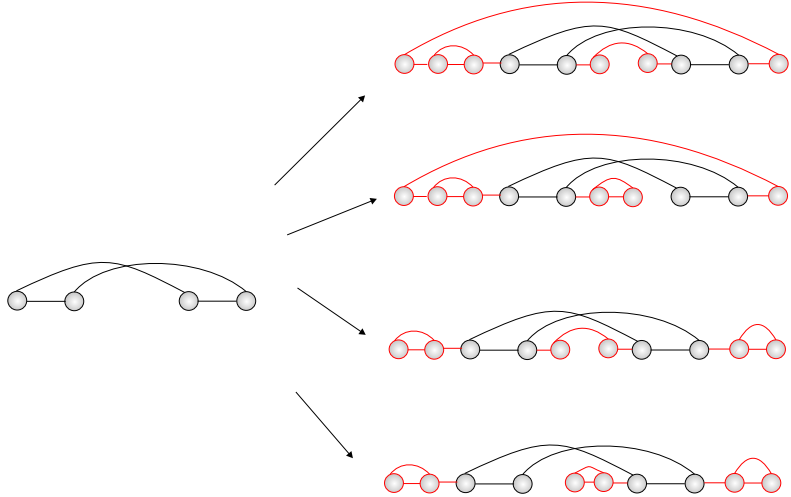}
\end{center}
\caption{\small The four cases of adding $\mathcal{AX}$.}\label{F:addAX}
\end{figure}
Next we add $\mathcal{AX}$ to the two termini, considering the four cases
$\mathcal{A}\bullet \phi \bullet \mathcal{A}$, 
$\mathcal{A}\bullet \phi \bullet \mathcal{X}$,
$\mathcal{X}\bullet \phi \bullet \mathcal{A}$,
and $\mathcal{X}\bullet \phi \bullet \mathcal{X}$,
see Fig.~\ref{F:addAX}.

\begin{figure}[!ht]
 \begin{center}
 \includegraphics[width=0.6\textwidth]{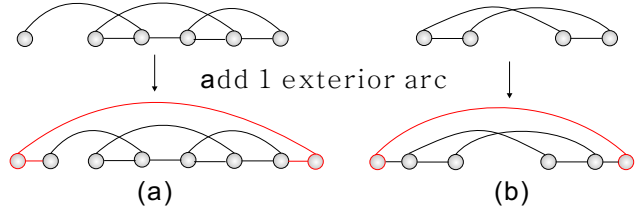}
 \end{center}
 \caption{\small $A$-shadow (a): if an exterior arc is added, the genus does not change. 
 $B$-shadow (b), if an exterior arc is added, the genus increases by $1$.
 }\label{F:addAX2}
 \end{figure}

Lemma 1 in \citep{Huang:2bb} allows to compute genus: if $\phi$ is a $A$-shadow,
the genus of the structure is just the sum of the substructure genera. 
In case of $\phi$ being a $B$-shadow, the genus contribution is $\left(g(\phi_B)+1\right)$
if there exist exterior arcs not contained in the shadow. 
However, if there are no such exterior arcs, the genus contribution is $g(\phi_B)$,
see Fig.~\ref{F:addAX2}.

Accordingly, the generating function of $\mathcal{W}_{\phi_{A}}$ is 
\begin{align}
{\bf W}_{\phi_{A}}(u,t)= t^g\left(\frac{u}{1-u {\bf C}(u,t)^2}\right)^m 
{\bf C}(u,t)^{2m-2} \left({\bf A}(u,t)+{\bf C}(u,t)^2\right)^2.
\end{align}
For $\phi$ being of type $B$ we obtain

\begin{equation}
 \begin{split}
{\bf W}_{\phi_{B}}(u,t)= & t^{g}\left(\frac{u}{1-u {\bf C}(u,t)^2}\right)^m 
{\bf C}(u,t)^{2m-2} \left(t{\bf A}(u,t)^2+ 2t {\bf A}(u,t){\bf C}(u,t)^2 \right. \\
& +\left. {\bf C}(u,t)^4\right).
\end{split}
\end{equation}

Our above arguments only depend on the number of $\phi$-arcs and the type of $\phi$.
Thus we have for any other irreducible shadow $\varrho$ of genus $g$ over two backbones 
having the same number of arcs and the same type,
${\bf W}_{\phi}(u,t)= {\bf W}_{\varrho}(u,t)$.
Consequently we arrive at
\begin{align}
{\bf Q}_{1}(u,t)= 
\sum_{g\geq 0}\left(\sum_{\phi \in A} {\bf W}_{\phi_A}(u,t)+\sum_{\phi \in B} 
{\bf W}_{\phi_B}(u,t)\right),
\end{align}
and 

\begin{equation}
 \begin{split}
 {\bf Q}_{1}(u,t)= &\sum_{g,m}\Big[ i_{2,A_g}(m) t^g 
\left(\frac{u}{1-u{\bf C}(u,t)^2}\right)^m  {\bf C}(u,t)^{2m-2}
\left({\bf A}(u,t)+{\bf C}(u,t)^2\right)^2 \\ 
 & + \left. i_{2,B_g}(m)t^{g}
\left(\frac{u}{1-u {\bf C}(u,t)^2}\right)^m
{\bf C}(u,t)^{2m-2}[t{\bf A}(u,t)^2+\right. \\
&2t {\bf A}(u,t){\bf C}(u,t)^2 +{\bf C}(u,t)^4] \Big]\\
 = &{\bf I}_{2,A}\left(\frac{u{\bf C}(u,t)^2}
{1-u{\bf C}(u,t)^2},t \right)
\frac{({\bf A}(u,t)+{\bf C}(u,t)^2)^2}{{\bf C}(u,t)^2}\\
&+{\bf I}_{2,B}\left(\frac{u{\bf C}(u,t)^2}{1-u{\bf C}(u,t)^2},
t \right)\frac{t{\bf A}(u,t)^2+2t{\bf A}(u,t){\bf C}(u,t)^2
+{\bf C}(u,t)^4}{{\bf C}(u,t)^2},
 \end{split}
\end{equation}

as claimed.

Suppose next we have $p\geq 2$. We shall distinguish three scenarios: 

\begin{itemize}
\item $s$ contains exactly $m>0$ $A$-shadows and $n$ $B$-shadows, where
$m+n=p$. Then the genus contribution is:
\begin{equation}
 \sum_{1}^{m} g(A_i)+\sum_{1}^{n} (g(B_i)+1),
\end{equation}
\item  $s$ contains no $A$-shadows, but there exist some exterior arcs
that do not belong to  $B$-shadows. 
Then genus contribution is $\sum_{1}^{p} (g(B_i)+1)$,
\item The exterior arcs of the matching are exclusively generated 
by $B$-shadows, see Fig.~\ref{F:typeB}. Then the exterior arcs of
$s$ can be decomposed in to $p$ shadows of type $B$, each of these
has genus $g_j$. Their corresponding genus is $\sum_{1}^{p}(g_j+1)-1$.
\end{itemize}
\begin{figure}[!ht]
 \begin{center}
 \includegraphics[width=0.5\textwidth]{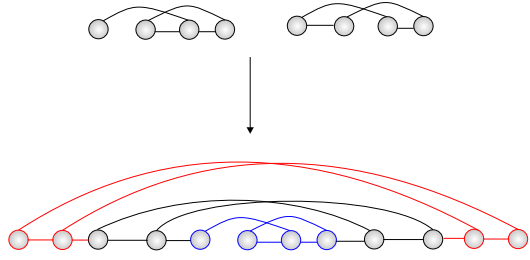}
 \end{center}
 \caption{\small A matching exclusively generated by $B$-shadows.  
 }\label{F:typeB}
 \end{figure}

Let $\mathcal{T}$ denote the combinatorial class of all the matchings over 
two backbones whose genus is given by $g=\sum g_A+\sum (g_B +1) + \sum g_C$,
where $\sum g_A$ is the sum of the genera of all $A$-shadows,
$\sum (g_B+1)$ is the sum of the genera of all $B$-shadows plus one, respectively.
 $\sum g_C$ is the sum of all the genera of inserted $\mathcal{C}$-matchings. That is
$\mathcal{T}$ neglects the genus decrease by $1$ when encountering the pure $B$-shadow 
case. Let $\mathcal{T}_p$ denote the class of these matchings that contain $p$ nested 
shadows and ${\bf T}_p(u,t)$ the corresponding generating function.

\emph{Claim $2.1$} 
\begin{equation}
\begin{split}
 \label{E:T1}
{\bf T}_1(u,t)= &\left[{\bf I}_{2,A}\left(\frac{u{\bf C}(u,t)^2}
{1-u{\bf C}(u,t)^2}, t\right) +
t \cdot{\bf I}_{2,B}\left(\frac{u {\bf C}(u,t)^2}
{1-u {\bf C}(u,t)^2}, t\right)\right]\\
& \cdot \frac{({\bf A}
(u,t)+{\bf C}(u,t)^2)^2}{{\bf C}(u,t)^2}.
\end{split}
\end{equation}
The proof is analogous to that of Claim $1$, the only difference  
emerging when adding the class $\mathcal{AX}$:
for $B$-shadows there is no need to distinguish the cases $\mathcal{A}$ 
and $\mathcal{X}$.

\emph{Claim $2.2$}
For $p\geq 2$, we have 
\begin{equation}
\begin{split}
 \label{E:Tp}
{\bf T}_{p}(u,t)= &
{\bf T}_{ p-1}(u,t)\cdot \left[{\bf I}_{2,A}
\left(\frac{u{\bf C}(u,t)^2}{1-u{\bf C}(u,t)^2},t\right)+ 
t \cdot {\bf I}_{2,B}\left(\frac{u{\bf C}(u,t)^2}
{1-u{\bf C}(u,t)^2},t\right)\right] \\
& \cdot\frac{\left({\bf A}(u,t)+{\bf C}(u,t)^2\right)}
{{\bf C}(u,t)^2}.
\end{split}
\end{equation}
To prove Claim $2.2$, let $\hat{\mathcal{T}}_{p}$ denote the subset of
 $\mathcal{T}$-matchings $s$ whose maximal arcs constitute an irreducible
shadow $sh$ in $Sh_2(s)$. Any $\mathcal{T}_p$-matching 
is of the form $\hat{\mathcal{T}}_{p}\bullet \mathcal{AX}$, \textit{i.e.}~
\begin{equation}
 \mathcal{T}_{p}=\hat{\mathcal{T}}_{p} \times (\mathcal{C}^2+\mathcal{A}).
\end{equation}
Let $\varrho$ be an irreducible shadow with genus $g$ and $m$ arcs and let 
${\hat{\mathcal{T}}}^{\varrho}_{p}$ denote the set of all
${\hat{\mathcal{T}}}_{p}$ matchings, $\hat{s}$, whose maximal arcs form $\varrho$. 
$\hat{s}$ can be obtained inductively by 
\begin{itemize} 
\item Nest exactly one $\mathcal{T}_{p-1}$-matching, $x$, into $\varrho$
 via the $\bullet$-product, if $\varrho$ is a $A$-shadow: 
 $g(x_1)=g(x)+g(\varrho)$ and $g(x_1)=g(x)+g(\varrho)+1$, 
 if $\varrho$ is a $B$-shadow;
\item Inflate each $\varrho$-arc into a sequence of induced arcs,
 $\nu \in  \mathcal{M}$, then we have $g(x_2)=g(x_1)+g(\nu)$;
\item Inflate each exterior arc in $x_2$ into a stack. There is no change
 in topological genus here, $g(x_3)=g(x_2)$;
\item Insert some $\mathcal{C}$-matchings, $c_1,\dots,c_k$  into the 
$(2m-2)$ $\sigma$-intervals of $\varrho$, then $g(\hat{s})=g(x_3)+\sum_j^kg(c_j)$.
\end{itemize}

Accordingly
\begin{equation}
{\hat{\mathcal{T}}}^{\varrho_A}_{p}=\mathcal{T}_{p-1} 
\times (\textbf{SEQ}(\mathcal{U})\times 
\textbf{SEQ}(\textbf{SEQ}(\mathcal{U})
\times (\mathcal{C}^2-1)))^m \times \mathcal{C}^{2m-2}\times \mathcal{V}^g,
\end{equation}
and
\begin{equation}
\hat{\mathcal{T}}^{\varrho_B}_{p}=\mathcal{T}_{p-1} 
\times (\textbf{SEQ}(\mathcal{U})\times 
\textbf{SEQ}(\textbf{SEQ}(\mathcal{U})\times 
(\mathcal{C}^2-1)))^m \times \mathcal{C}^{2m-2}\times \mathcal{V}^{g+1}.
\end{equation} 
Since the above constructions only depend on the number of arcs of $\varrho$,
and $\mathcal{T}_{p}=\hat{\mathcal{T}}_{p} \times (\mathcal{C}^2+\mathcal{A})$,
we have
\begin{equation}
 \begin{split}
 {\mathcal{T}}_{p}=&\mathcal{T}_{p-1} \times \Big(\sum_{g,m} 
i_{2,A_g}(m) \left(\textbf{SEQ}(\mathcal{U})\times 
\textbf{SEQ}(\textbf{SEQ}(\mathcal{U})\times 
(\mathcal{C}^2-1))\right)^m \\ & \times \mathcal{C}^{2m-2} 
 \times \mathcal{V}^g 
+ \sum_{g,m } i_{2,B_g}(m) \left(\textbf{SEQ}(\mathcal{U})\times 
\textbf{SEQ}(\textbf{SEQ}(\mathcal{U})\times 
(\mathcal{C}^2-1))\right)^m \\ & \times \mathcal{C}^{2m-2}
\times \mathcal{V}^{g+1}\Big) \times (\mathcal{C}^2+\mathcal{A}). 
\end{split}
\end{equation}

This implies
\begin{align}
\nonumber &{\bf T}_{p}(u,t)={\bf T}_{ p-1}(u,t)\cdot
 \left[{\bf I}_{2,A}\left(\frac{u{\bf C}(u,t)^2}{1-u{\bf C}(u,t)^2},
 t\right)+ t \cdot {\bf I}_{2,B}\left(\frac{u{\bf C}(u,t)^2}
 {1-u{\bf C}(u,t)^2},t\right)\right] \\
 &\cdot \left({\bf A}(u,t)+{\bf C}(u,t)^2\right)
 \cdot\frac{1}{{\bf C}(u,t)^2},
\end{align}
whence Claim $2.2$.

Claim $2.1$ and Claim $2.2$ allow us to recursively calculate ${\bf T}_p(u,t)$, for
$p \geq 2$. However, when calculating ${\bf T}_p(u,t)$, we neglected the 
genus-decrease in case of pure $B$-shadows. In order to correct this we introduce 
${\bf K}_p(u,t)$, i.e.~${\bf Q}_p(u,t)={\bf T}_p(u,t)+{\bf K}_p(u,t)$.

\emph{Claim $3$.}
\begin{align}
& {\bf K}_{1}(u,t)= 
\left[(1-t) \cdot  {\bf I}_{2,B}\left(\frac{u{\bf C}(u,t)^2}
{1-u{\bf C}(u,t)^2},t\right)\right] 
\cdot {\bf C}(u,t)^2, \\
&{\bf K}_{p}(u,t)=
{\bf K}_{p-1}(u,t)\cdot \left[t \cdot {\bf I}_{2,B}
\left(\frac{u{\bf C}(u,t)^2}{1-u{\bf C}(u,t)^2},
t\right)\right].
\end{align}
To prove the first equation we restrict Claim $1$ and Claim $2.1$ to the case of pure 
$B$-shadows.  As for the second equation, we restrict Claim $2.2$ to pure $B$-shadows and 
notice that following the proof of Claim $2.2$, ignoring the decrease of genus by one in 
case of pure $B$-shadows, does not affect recursion eq.~(\ref{E:Tp}). 
Therefore Claim $3$ follows which allows us obtain ${\bf K}_p(u,t)$ for all $p\geq 2$.
Consequently,
\begin{equation}
{\bf Q}(u,t)= 
{\bf Q}_{0}(u,t)+{\bf Q}_{1}(u,t) + \sum_{p\geq 2} 
\left({\bf T}_{p}(u,t) + {\bf K}_{p}(u,t) \right),
\end{equation}
where ${\bf Q}_0(u,t)$ and ${\bf Q}_1(u,t)$ follows from Claim 
$0$ and Claim $1$, ${\bf T}_p(u,t)$ via the Claims $2.1$ 
and $2.2$ and ${\bf K}_p(u,t)$ via Claim $3$.

Setting
\begin{center}
${\bf I}_{2,A}={\bf I}_{2,A}\left(\frac{u{\bf C}(u,t)^2}
{1-u{\bf C}(u,t)^2},t \right)\quad$
and 
$\quad{\bf I}_{2,B}={\bf I}_{2,B}\left(\frac{u{\bf C}(u,t)^2}
{1-u{\bf C}(u,t)^2},t \right)$,
\end{center}
we obtain
\begin{align}
{\bf Q}(u,t)=\frac{{\bf C}(u,t)^2\left({\bf I}_{2,A}
+{\bf I}_{2,B}-t{\bf I}_{2,B}^2-{\bf I}_{2,A}
{\bf I}_{2,B}+u{\bf C}(u,t)^2(1-{\bf I}_{2,B})\right)}
{(1-t{\bf I}_{2,B})(1-u{\bf C}(u,t)^2-{\bf I}_{2,A}
-t{\bf I}_{2,B})},
\end{align}
and the proof of the theorem is complete.
\end{proof}

\begin{cor}\label{C:recursion}
${\bf I}_{2,A_g}(y)$ and ${\bf I}_{2,B_g}(y)$ can be computed as follows:\\
$\bullet$ ${\bf I}_{2,A_0}(y)=0$,\\
$\bullet$ ${\bf I}_{2,A_{g+1}}(y)$ can be computed recursively via ${\bf I}_{g+1}(y)$, ${\bf I}_{2,A_i}(y)$,
${\bf I}_{2,B_i}(y)$, ${\bf Q}_i(y)$ and ${\bf C}_i(y)$, where $i\leq g$,\\
$\bullet$ 
\begin{equation}
{\bf I}_{2,B_g}(y)= 2(y^2+y)\frac{d {\bf I}_{g+1}(y)}{dy}
-{\bf I}_{g+1}(y)-{\bf I}_{2,A_{g+1}}(y).
\end{equation}
\end{cor}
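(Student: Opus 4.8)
The plan is to run a single induction on the genus $g$ that interlocks two inputs: the bijection $\alpha$ of Lemma~\ref{L:bijection}, which produces the closed identity of the third bullet, and the functional equation of Theorem~\ref{T:mainthm}, which makes the scheme effective. Throughout I would regard the bivariate series ${\bf C}(u,t)$ and ${\bf Q}(u,t)$ of \emph{all} matchings over one resp.\ two backbones, graded by arc number and genus, as given data (the classical genus expansions of chord diagrams), so that every ${\bf C}_i$ and ${\bf Q}_i$ is available. The base case is the first bullet: a shadow over one backbone has genus $\ge 1$, so there is no $d$-shadow of genus $0$; hence in Lemma~\ref{L:bijection} with $g=0$ the codomain $\mathcal{I}_{0,m}\sqcup\mathcal{D}_{0,m}$ and also $\mathcal{B}_{-1,m}$ are empty, forcing $\mathcal{A}_{0,m}=\emptyset$ and a fortiori ${\bf I}_{2,A_0}(y)=0$.

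Next I would prove the third bullet by transcribing Lemma~\ref{L:bijection} into generating functions, restricted to irreducible shadows. One first checks that $\alpha$ and its inverse preserve irreducibility: gluing and cutting change neither the arc set nor the pairwise crossing relations, and cutting an irreducible one-backbone shadow along any backbone interval cannot disconnect it, since in an irreducible shadow every interval is straddled by some arc (otherwise the crossing graph would split). After the genus shift $g\mapsto g+1$, Lemma~\ref{L:bijection} thus becomes a bijection between the irreducible $A$-shadows of genus $g+1$ with $m$ arcs together with the irreducible $B$-shadows of genus $g$ with $m$ arcs, and the one-backbone shadows of genus $g+1$ with $m$ arcs together with the $d$-shadows of genus $g+1$ with $m$ arcs, each of the latter carrying a distinguished gluing interval (the mark recorded in the proof of Lemma~\ref{L:bijection}). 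A one-backbone shadow with $m$ arcs is a matching on $2m$ vertices and so carries $2m-1$ legitimate marks, contributing $(2m-1)\,i_{g+1}(m)$; a $d$-shadow of genus $g+1$ with $m$ arcs comes from an irreducible one-backbone shadow of genus $g+1$ with $m-1$ arcs by inflating one of its $m-1$ arcs into a size-two stack, and the only legitimate marks are the two intervals of that stack, contributing $2(m-1)\,i_{g+1}(m-1)$. Summing over $m$,
\[
{\bf I}_{2,A_{g+1}}(y)+{\bf I}_{2,B_g}(y)=2(y^2+y)\frac{d{\bf I}_{g+1}(y)}{dy}-{\bf I}_{g+1}(y),
\]
which is the third bullet after rearrangement, and it shows that ${\bf I}_{2,A_{g+1}}$ is pinned down as soon as ${\bf I}_{2,B_g}$ is.

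It remains to compute ${\bf I}_{2,B_g}$, which I would obtain inductively from eq.~\eqref{E:mainthm}. Assume ${\bf I}_{2,A_i},{\bf I}_{2,B_i}$ are known for $i\le g-1$ and ${\bf I}_{2,A_g}$ is known (it equals $0$ for $g=0$ and was produced by the previous step otherwise). Write eq.~\eqref{E:mainthm} as ${\bf Q}\cdot D=N$, with $N,D$ the numerator and denominator of its right-hand side, and extract the coefficient of $t^{\,g}$. In $D$ every occurrence of ${\bf I}_{2,B}$ carries an explicit factor $t$, so $[t^g]({\bf Q}D)$ involves only ${\bf I}_{2,B_i}$ with $i\le g-1$; in $N$ the $t$-free part of the terms ${\bf C}^2{\bf I}_{2,B}$ and $-u{\bf C}^4{\bf I}_{2,B}$ contributes exactly ${\bf C}_0^{2}(1-u{\bf C}_0^{2})\,{\bf I}_{2,B_g}(w_0)$ with $w_0=\frac{u{\bf C}_0^{2}}{1-u{\bf C}_0^{2}}$, every other term involving only ${\bf Q}_i,{\bf C}_i,{\bf I}_{2,A_i}\ (i\le g)$ and ${\bf I}_{2,B_i}\ (i\le g-1)$ (note ${\bf I}_{2,A_0}=0$ kills any genus-$g$ ${\bf I}_{2,B}$ hidden in ${\bf I}_{2,A}{\bf I}_{2,B}$). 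Since ${\bf C}_0=1+O(u)$ and $1-u{\bf C}_0^{2}=1+O(u)$ are invertible power series, and $w_0=u+O(u^2)$ is an invertible power series, comparing coefficients of $u$ recovers the polynomial ${\bf I}_{2,B_g}(y)$ from the lower-genus data; the third bullet then yields ${\bf I}_{2,A_{g+1}}(y)=2(y^2+y)\frac{d{\bf I}_{g+1}}{dy}-{\bf I}_{g+1}-{\bf I}_{2,B_g}$, giving the second bullet.

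The step I expect to be delicate is the combinatorial accounting behind the third bullet — rigorously identifying the legitimate gluing intervals as precisely the $2m-1$ backbone intervals of a one-backbone shadow and precisely the two intervals of the inflated stack of a $d$-shadow, i.e.\ pinning down exactly which marked objects lie in the image of $\alpha$ — and, in the inductive step, verifying that the only genus-$g$ unknown occurring in $[t^g]$ of eq.~\eqref{E:mainthm} is ${\bf I}_{2,B_g}$ and that it enters linearly with the invertible coefficient ${\bf C}_0^{2}(1-u{\bf C}_0^{2})$, so that the recursion is well defined at every stage.
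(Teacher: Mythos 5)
Your proposal is correct and follows essentially the same route as the paper's proof in the appendix: there, Claim $2$ derives the third bullet from Lemma~(\ref{L:bijection}) restricted to irreducible shadows via the same count $i_{2,A_{g+1}}(m)+i_{2,B_{g}}(m)=(2m-1)i_{g+1}(m)+2(m-1)i_{g+1}(m-1)$, and Claim $1$ obtains the second bullet by clearing the denominator of eq.~(\ref{E:mainthm}), extracting $[t^g]$, and inverting the substitution $y=\frac{u{\bf C}_0^2(u)}{1-u{\bf C}_0^2(u)}$, yielding the same interlocked induction ${\bf I}_{2,A_g}\to{\bf I}_{2,B_g}\to{\bf I}_{2,A_{g+1}}$. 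The two points you flag as delicate (which marked objects lie in the image of $\alpha$, and the linear occurrence of ${\bf I}_{2,B_g}$ with invertible coefficient ${\bf C}_0^2(1-u{\bf C}_0^2)$) are exactly the steps the paper asserts with less detail, so your write-up is if anything more explicit.
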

\begin{proof} We prove the Corollary~(\ref{C:recursion}) in Section~(\ref{S:appendix}).
\end{proof}

In light of Corollary~(\ref{C:recursion}), it suffices to compute ${\bf C}_g(y)$, ${\bf I}_g(y)$ 
and ${\bf Q}_g(y)$.

As for ${\bf C}_g(y)$, suppose first $g=0$. Then ${\bf C}_0(y)$ is the 
generating function of the Catalan numbers, \textit{i.e.}~${\bf C}_0(y)=\frac{1-\sqrt{1-4y}}{2y}$.
For $g\ge 1$, ${\bf C}_g(y)=\sum c_g(n)y^n$ has been computed in \citep{harer} and 
\citep{Penner-orthogonal}. The key recursion discovered by \citep{harer} reads
\begin{lem}\citep{harer}
The $c_g(n)$ satisfy the recursion
\begin{equation}
(n+1)c_g(n)=2(2n-1)c_g(n-1)+(2n-1)(n-1)(2n-3)c_{g-1}(n-2),
\end{equation}
where $c_g(n)=0\ $ for $2g>n$.
\end{lem}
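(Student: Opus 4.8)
The plan is to reduce the statement to a single polynomial recursion for the \emph{face polynomial} and then extract that recursion from the classical Harer--Zagier generating-function identity. First I would pass from the numbers $c_g(n)$ to the polynomial $C_n(N) = \sum_{g\ge 0} c_g(n)\,N^{n+1-2g}$, which records a matching of genus $g$ with $n$ arcs by the monomial $N^{r}$, where $r = n+1-2g$ is its number of boundary components. This exponent is legitimate by the Euler relation already established ($2-2g-r = v-e = 1-n$ for one backbone), and the side condition $c_g(n)=0$ for $2g>n$ is just $r\ge 1$. Because distinct genera contribute distinct powers of $N$, the claimed recursion for $c_g(n)$ is \emph{equivalent} to the polynomial recursion
\begin{equation*}
(n+1)\,C_n(N) = 2(2n-1)\,N\,C_{n-1}(N) + (2n-1)(n-1)(2n-3)\,C_{n-2}(N).
\end{equation*}
Indeed, multiplying the $c_g$-recursion by $N^{n+1-2g}$ and summing over $g$ turns the first term into $N\,C_{n-1}(N)$ and, after reindexing $g\mapsto g-1$, the second into $C_{n-2}(N)$; conversely the polynomial identity is read off coefficientwise in $N$.

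Next I would establish the Harer--Zagier identity
\begin{equation*}
1 + 2\sum_{n\ge 0} \frac{C_n(N)}{(2n-1)!!}\,x^{n+1} = \left(\frac{1+x}{1-x}\right)^{N},
\end{equation*}
which carries the genuine content of the lemma. Here $C_n(N)$ is recognised combinatorially as the moment $\langle \operatorname{tr} M^{2n}\rangle$ of the Gaussian Hermitian ensemble: Wick's theorem expands the moment as a sum over pairings of the $2n$ half-edges, each pairing being exactly a one-backbone matching and contributing $N$ raised to its number of cycles, which is its number of boundary components $r$. The closed form then follows from the Hermite orthogonal-polynomial evaluation of the integral, exactly as in \citep{harer}. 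This is the step I expect to be the main obstacle, since all the quantitative work is concentrated here.

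Finally, with the identity in hand the recursion is immediate. Writing $y=\left(\frac{1+x}{1-x}\right)^{N}$, one checks the first-order linear ODE $(1-x^2)\,y' = 2N\,y$. Substituting the power series $y = 1 + 2\sum_{k\ge0} b_k\,x^{k+1}$ with $b_k = C_k(N)/(2k-1)!!$ and comparing the coefficient of $x^{n}$ gives $(n+1)\,b_n = 2N\,b_{n-1} + (n-1)\,b_{n-2}$; clearing the double factorials via $(2n-1)!!/(2n-3)!! = 2n-1$ and $(2n-1)!!/(2n-5)!! = (2n-1)(2n-3)$ produces precisely the polynomial recursion above, and hence the stated recursion for $c_g(n)$.

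As an alternative that bypasses the closed form, I note that the three-term polynomial recursion can also be proved directly by a surgery/bijection on the glued-polygon model: distinguishing a half-edge and deleting either one or two chords incident to it, then tracking the resulting change in the number of boundary components (the power of $N$). The two right-hand terms correspond to these two deletion types, but pinning down the exact multiplicities $2(2n-1)$ and $(2n-1)(n-1)(2n-3)$ requires careful bookkeeping of how faces merge or split, which is where that route gets delicate; for this reason I would present the generating-function argument as the main line of proof.
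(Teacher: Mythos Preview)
Your proposal is correct and follows the classical Harer--Zagier argument (face polynomial $C_n(N)$, the closed form $\bigl(\tfrac{1+x}{1-x}\bigr)^N$ for the exponential generating series, and the three-term recursion read off from the ODE $(1-x^2)y'=2Ny$); the coefficient extraction you sketch is accurate.

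However, there is nothing to compare: the paper does \emph{not} prove this lemma. It is stated with the citation \citep{harer} and used as an input for computing ${\bf C}_g(y)$, with no argument supplied. So your write-up is not an alternative to the paper's proof but rather a reconstruction of the cited result. If you intend to include it, you might streamline by pointing to \citep{harer} (or to the Penner orthogonal-polynomial version \citep{Penner-orthogonal}) for the identity itself, since that is where all the work sits, and keep only the short ODE-to-recursion step in the text.
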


The polynomials ${\bf I}_{g}(y)$ have already been computed in \citep{Reidys:gamma}, the idea there
 is to construct an analogue of Corollary~(\ref{C:recursion}):
\begin{lem}
For $g\geq 1$, $I_g(y)$ satisfies the following recursion
\begin{equation}
\begin{split}
{\bf I}_g(y)=& {\bf C}_g(\theta(y)) -\theta(y)
\sum\limits_{i=0}^{g}{\bf C}_i(\theta(y)){\bf C}_{g-i}(\theta(y)) \\
&- \sum\limits_{j=1}^{g-1}[t^{g-j}]{\bf I}_j
\left(\frac{\theta(y)(\sum_{k=0}^{g-i}
{\bf C}_k(\theta(y))t^k)^2}{1-\theta(y)
(\sum_{k=0}^{g-i}{\bf C}_k(\theta(y))t^k)^2}\right),
\end{split}
\end{equation}
where $\theta(y)=\frac{y(y+1)}{(2y+1)^2}$.
\end{lem}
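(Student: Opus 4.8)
The plan is to mimic the derivation of ${\bf H}_\gamma(u)$ given in Theorem~(\ref{T:H(u)}), but at the level of the bivariate (genus-filtered) generating functions, and then solve the resulting algebraic identity for ${\bf I}_g(y)$ coefficient-by-coefficient in $t$. First I would recall the symbolic decomposition of an arbitrary matching over one backbone: each such matching is built by nesting irreducible one-backbone shadows, whose arcs are inflated into stacks and whose $\sigma$-intervals are filled with (possibly empty) sub-matchings; in between the nested shadows sit stacks of non-crossing arcs, again carrying $\mathcal{C}$-matchings in their $P$-intervals. Carrying this construction through while keeping track of genus (a nested irreducible shadow of genus $g$ multiplies by $t^g$; inflation into stacks and non-crossing insertions do not change genus; inserted sub-matchings contribute their own genus) gives a functional equation relating ${\bf C}(u,t)$ and ${\bf I}(u,t)$ that is the bivariate lift of eq.~(\ref{E:stru}). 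The substitution bookkeeping here is exactly the one already performed in Steps~1--3 of the proof of Theorem~(\ref{T:mainthm}), specialized to one backbone, so the clean way to write it is to introduce $\theta(y)=\frac{y(y+1)}{(2y+1)^2}$ as the composite ``arc-to-stack-through-shadow'' substitution variable; one checks directly that with this $\theta$ the equation for ${\bf C}(u,t)$ in terms of the shadow polynomials evaluated at $\theta$ takes the stated closed form.

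Next I would extract the genus-$g$ part. Write ${\bf C}(u,t)=\sum_{k\ge 0}{\bf C}_k(u)t^k$ and substitute into the functional equation. The point is that the term containing ${\bf I}_g$ appears linearly and multiplied by $t^g$ times quantities that, after the $\theta$-substitution, only involve ${\bf C}_i$ with $i\le g$; all contributions of ${\bf I}_j$ with $j<g$ are collected into the final sum $\sum_{j=1}^{g-1}[t^{g-j}]{\bf I}_j(\cdots)$, where the argument is precisely the one-backbone analogue of the nested-shadow substitution, namely $\frac{\theta(y)\bigl(\sum_{k=0}^{g-j}{\bf C}_k(\theta(y))t^k\bigr)^2}{1-\theta(y)\bigl(\sum_{k=0}^{g-j}{\bf C}_k(\theta(y))t^k\bigr)^2}$. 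Matching the remaining terms against the bivariate ${\bf C}$-equation and solving for ${\bf I}_g(y)$ yields
\[
{\bf I}_g(y)= {\bf C}_g(\theta(y)) -\theta(y)\sum_{i=0}^{g}{\bf C}_i(\theta(y)){\bf C}_{g-i}(\theta(y)) - \sum_{j=1}^{g-1}[t^{g-j}]{\bf I}_j\!\left(\frac{\theta(y)\bigl(\sum_{k=0}^{g-j}{\bf C}_k(\theta(y))t^k\bigr)^2}{1-\theta(y)\bigl(\sum_{k=0}^{g-j}{\bf C}_k(\theta(y))t^k\bigr)^2}\right),
\]
which is the claimed recursion. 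Since ${\bf C}_0(y)$ is the Catalan generating function and ${\bf C}_g(y)$ for $g\ge 1$ is given by Harer--Zagier, and since the right-hand side only refers to ${\bf I}_j$ with $j<g$, this determines all the ${\bf I}_g(y)$ recursively, and the fact that each ${\bf I}_g(y)$ is a polynomial of the stated degree range follows from Corollary~(\ref{C:finiteshadows}).

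The main obstacle I expect is the bookkeeping that shows the argument of the inner ${\bf I}_j$ is exactly $\frac{\theta(y)(\sum_{k=0}^{g-j}{\bf C}_k(\theta(y))t^k)^2}{1-\theta(y)(\sum_{k=0}^{g-j}{\bf C}_k(\theta(y))t^k)^2}$ and not some other rational expression: one has to verify that the two-step ``inflate an arc into a stack, then fill its two $P$-intervals with $\mathcal{C}$-matchings'' operation composes, across the nesting of shadows, to give precisely the substitution $y\mapsto\theta(y)$ in the outer variables while the inner shadows see the substitution through a further factor $\frac{\xi}{1-\xi}$ with $\xi=\theta(y)(\sum_k{\bf C}_k(\theta(y))t^k)^2$. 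This is the same computation that underlies Steps~1--2 of the proof of Theorem~(\ref{T:mainthm}) (where $\frac{u}{1-u{\bf C}(u,t)^2}$ arises from composing $\frac{u}{1-u}$ with the $\sigma$-insertion), so the cleanest exposition is to isolate that composition as a lemma and then quote it; truncating the sums at $g-j$ rather than letting them run to infinity is justified because higher-genus ${\bf C}_k$ would force the total genus above $g$. Everything else — collecting the non-crossing-stack contributions into the $-\theta(y)\sum_{i}{\bf C}_i{\bf C}_{g-i}$ term and the single-shadow contribution into ${\bf C}_g(\theta(y))$ — is routine once the substitution identity is in place; I would simply reference \citep{Reidys:gamma} for the details and present the recursion as recalled there.
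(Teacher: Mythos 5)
Your proposal is correct and follows exactly the route the paper intends: the paper does not actually prove this lemma but cites \citep{Reidys:gamma} and remarks that the idea is the one-backbone analogue of the appendix proof of Corollary~(\ref{C:recursion}), namely lift eq.~(\ref{E:stru}) to the bivariate genus-filtered setting, multiply out, extract $[t^g]$ (where the $j=g$ term of the shadow sum is ${\bf I}_g$ evaluated at $\frac{u{\bf C}_0(u)^2}{1-u{\bf C}_0(u)^2}$), and invert that substitution via $u=\theta(y)$ --- which is precisely your argument, including the truncation of ${\bf C}(u,t)$ at order $t^{g-j}$ inside the lower-genus terms. You also correctly write $g-j$ as the inner summation bound where the paper's displayed statement carries the typo $g-i$.
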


In particular,
\begin{align*}
& {\bf I}_1(y)=y^2(1+y)^2,\\
& {\bf I}_2(y)=y^4(1+y)^4(17+92 y+96 y^2),\\
& {\bf I}_3(y)=y^6(1+y)^6(1259+15928 y+61850 y^2+92736 y^3 + 47040 y^4).
\end{align*} 

${\bf Q}_g(y)$, the generating function of $2$-backbone matchings of genus $g$
has been computed in \citep{Hillary}. Here the authors established a bijection 
between unicellular maps \citep{chapuy} and bicellular maps. Their bijection has 
the following enumerative corollary

\begin{cor}
The generating function ${\bf Q}_g(y)$ and ${\bf C}_{g}(y)$ 
satisfy the following functional equation
\begin{equation}
 \sum^{g+1}_{g_1=0}{\bf C}_{g_1}(y){\bf C}_{g+1-g_1}(y)
+{\bf Q}_g(y)={\bf C}_{g+1}(y)/y,
\end{equation}
which is equivalent to the coefficient equation
\begin{equation}
 \sum_{g_1=0}^{g+1}\sum_{i\geq 0}^{n}c_{g_1}(i)
c_{g+1-g_1}(n-i)+q_g(n)=c_{g+1}(n+1).
\end{equation}
\end{cor}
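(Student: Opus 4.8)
The plan is to prove the identity bijectively, by translating both sides into the language of maps (fatgraphs). First I would recall the classical dictionary already implicit in the citations to \citep{harer,Penner-orthogonal}: collapsing each backbone of a genus-$g$ matching over one (resp.\ two) backbones to a single vertex and passing to the dual fatgraph identifies such matchings with rooted \emph{unicellular} (resp.\ \emph{bicellular}) maps of genus $g$ — connected ribbon graphs with one (resp.\ two) faces — the number of arcs becoming the number of edges, so that ${\bf C}_g(y)$ and ${\bf Q}_g(y)$ are the generating functions of rooted unicellular, resp.\ bicellular, maps of genus $g$ by edges. It then suffices to exhibit a bijection between rooted unicellular maps of genus $g+1$ with $n+1$ edges and the disjoint union of (i) rooted bicellular maps of genus $g$ with $n$ edges and (ii) ordered pairs of rooted unicellular maps whose genera sum to $g+1$ and whose edge numbers sum to $n$. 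Extracting the coefficient of $y^n$ from
\begin{displaymath}
{\bf C}_{g+1}(y)/y={\bf Q}_g(y)+\sum_{g_1=0}^{g+1}{\bf C}_{g_1}(y){\bf C}_{g+1-g_1}(y)
\end{displaymath}
then yields at once both the stated functional equation and the stated coefficient equation, using $[y^n]\big({\bf C}_{g+1}(y)/y\big)=c_{g+1}(n+1)$.

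The bijection itself is the \emph{delete-the-root-edge} map, in the spirit of Chapuy's slicing constructions \citep{chapuy} and carried out for two faces in \citep{Hillary}. Given a rooted unicellular map $M$ of genus $g+1$ with $n+1$ edges, let $\rho$ be its root edge and delete it. If $\rho$ is a bridge, $M$ falls apart into two connected maps, each still with a single face, whose genera add up to $g+1$ and whose edge counts add up to $n$; using the orientation of $\rho$ to order the two pieces and the corner vacated by $\rho$ at each of its endpoints to re-root them, one lands in case (ii). If $\rho$ is not a bridge, an Euler-characteristic count shows that $M\setminus\rho$ stays connected, its number of faces jumps from $1$ to $2$, and its genus drops to $g$; re-rooting at the corner vacated by $\rho$ and ordering the two new faces by the side of $\rho$ on which they lie produces a rooted bicellular map of genus $g$ with $n$ edges, i.e.\ case (i). Conversely, from an ordered pair of rooted unicellular maps one rebuilds $M$ by joining the two root corners with a new (bridge) root edge, and from a rooted bicellular map one rebuilds $M$ by inserting a new root edge across the two faces at the prescribed corners; in each case a short $\chi$-computation confirms that the result has genus $g+1$ and $n+1$ edges and returns the input upon deleting its root edge. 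Since the root of a unicellular map either is or is not a bridge, these two partial maps assemble into the desired bijection.

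The Euler-characteristic bookkeeping (deleting a non-bridge from a one-face map forces $f\colon 1\to 2$ and lowers the genus by one; adding a bridge preserves the total genus; inserting a face-merging edge raises the genus by one) is routine. The part that needs genuine care — and is the combinatorial heart of \citep{Hillary} — is making the re-rooting and the ordering conventions precise enough that the forward and backward maps are mutually inverse: one must pin down exactly which vacated corner becomes the new root, how the two faces (resp.\ the two components) are ordered, and how this matches the ordered pair $(R,S)$ of backbones built into the definition of $q_g(n)$, and one must separately dispose of the degenerate cases (the empty/single-vertex map, loops, and the smallest edge number compatible with a given genus). Once the bijection is in place the generating-function statement is immediate, and the coefficient form follows by reading off $[y^n]$.
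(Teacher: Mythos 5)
Your root-edge-deletion bijection (non-bridge deletion giving a bicellular map of genus $g$, bridge deletion giving an ordered pair of unicellular maps of total genus $g+1$, with the Euler-characteristic bookkeeping you describe) is correct and is precisely the unicellular/bicellular bijection that the paper invokes: the paper offers no proof of its own here, simply quoting the identity as the enumerative corollary of the bijection in the cited work of Han and Reidys. So your argument is essentially the paper's intended one, with the honest caveat you already flag that the rooting and ordering conventions must be pinned down to match the ordered-backbone definition of $q_g(n)$.
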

Accordingly we derive
\begin{align*}
 {\bf I}_{2,A_0}(y)=& 0,\\
 {\bf I}_{2,A_1}(y)=& y^3(11+18y+8y^2),\\
 {\bf I}_{2,A_2}(y)=& y^5(y+1)(928+5378 y+12515 y^2
+14520 y^3+8376 y^4+1920 y^5),\\
{\bf I}_{2,A_3}(y)=& y^7(y+1)^2(162158+1835979 y
+8891051 y^2 + 24076143 y^3 \\
&+39943686 y^4 + 41655548 y^5 + 26715416 y^6 + 9649920 y^7\\
&+1505280 y^8),\\
 \textbf{I}_{2,B_0}(y)= & y^2(3+3y+y^2),\\
 \textbf{I}_{2,B_1}(y)= & y^4(y+1)(119+529 y+991 y^2
+960 y^3+476 y^4+96 y^5),\\
 {\bf I}_{2,B_2}(y)= & y^6(y+1)^2(13849+130518 y
+538113 y^2 + 1263849 y^3 \\
&+ 1847182 y^4 +1719618 y^5 + 995738 y^6 + 327936 y^7 \\
&+ 47040 y^8).
\end{align*}

\section{Two back-bones $\gamma$-matchings}
According to Corollary~(\ref{C:recursion}), we can recursively compute 
${\bf I}_{2,A_g}(u)$ and \\ ${\bf I}_{2,B_g}(u)$.
Let 
\begin{align}
 & {\bf I}_{{2,\gamma}_A}(u,t)=\sum_{g\leq \gamma} 
 {\bf I}_{2,A_g}(u) t^g, \\
 & {\bf I}_{{2,\gamma}_B}(u,t)=\sum_{g\leq \gamma}
  {\bf I}_{2,B_g}(u) t^g.
\end{align}  
Particularly,  for $\gamma =0$  and $1$, we have 
\begin{align*}
&{\bf I}_{{2,0}_A}(u,t)= 0,\\
&{\bf I}_{{2,0}_B}(u,t)= (3u^2+3u^3+u^4)t^0=3u^2+3u^3+u^4,\\
& {\bf I}_{{2,1}_A}(u,t)= (7u^3 + 6u^4)t, \\
&{\bf I}_{{2,1}_B}(u,t)=3u^2+3u^3+u^4 +(4u^3+131u^4+656u^5+1520u^6 \\
&+1951u^7+1436u^8+572u^9+96u^{10})t.
\end{align*}
Let furthermore $h_{\gamma}(g,n)$ denote the number of $\gamma$-matchings 
of genus $g$ with $n$ arcs. Then
\begin{align}
& {\bf H}_{\gamma} (u)=\sum_{g\leq \gamma}
\sum_n h_{\gamma}(g,n)u^n,  \quad {\bf H}_{\gamma}(u,t)=\sum_{g\leq \gamma}
\sum_n h_{\gamma}(g,n)u^n t^g.
\end{align}
${\bf H}_{\gamma}(u,t)$ has already been given in \citep{thomas1}.

Furthermore, let $q_{\gamma}(g,n)$ denote the number of
$\gamma$-interaction matchings of genus $g$ with $n$ arcs 
and
\begin{align}
& {\bf Q}_{\gamma} (u)=\sum_{g\leq \gamma}
\sum_n q_{\gamma}(g,n)u^n, \quad
 {\bf Q}_{\gamma}(u,t)=\sum_{g\leq \gamma}
\sum_n q_{\gamma}(g,n)u^n t^g.
\end{align}
We next compute the generating function $\gamma$-matchings over two backbones, ${\bf Q}_{\gamma}(u,t)$. We can see that
${\bf Q}_{\gamma}(u,t)$ and ${\bf Q}(u,t)$, discussed in Section~(\ref{S:Qmatching}), differ only
in terms of the range of the summation index of $g$. As a result, the proof of
Theorem~(\ref{T:mainthm}) can be duplicated and we derive
\begin{cor}\label{C:gamma}
The bivariate generating function of $\gamma$-matchings 
over two backbones: ${\bf Q}_{\gamma}(u,t)$, satisfies
\begin{align}
{\bf Q}_{\gamma}(u,t)=\frac{{\bf H}_{\gamma}(u,t)^2
\left({\bf {\bf I}}_{{2,\gamma}_A}+{\bf I}_{{2,\gamma}_B}-
t{\bf I}_{{2,\gamma}_B}^2-{\bf I}_{{2,\gamma}_A}{\bf I}_{{2,\gamma}_B}
+u{\bf H}_{\gamma}(u,t)^2(1-{\bf I}_{{2,\gamma}_B})
\right)}{(1-t{\bf I}_{{2,\gamma}_B})(1-u{\bf H}_{\gamma}
(u,t)^2-{\bf I}_{{2,\gamma}_A}-t{\bf I}_{{2,\gamma}_B})},
\end{align}
where
 \begin{math}
 {\bf I}_{{2,\gamma}_A}={\bf I}_{{2,\gamma}_A}
\left(\frac{u{\bf H}_{\gamma}(u,t)^2}
{1-u{\bf H}_{\gamma}(u,t)^2},t \right)
 \end{math}
and 
 \begin{math}
 {\bf I}_{{2,\gamma}_B}={\bf I}_{{2,\gamma}_B}
\left(\frac{u{\bf H}_{\gamma}(u,t)^2}
{1-u{\bf H}_{\gamma}(u,t)^2},t \right). 
 \end{math}
\end{cor}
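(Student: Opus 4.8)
The plan is to re-run the modular decomposition in the proof of Theorem~(\ref{T:mainthm}) essentially verbatim, the only change being a bookkeeping one: restrict every building block to genus at most $\gamma$. Concretely, I would first record the structural fact that a diagram over two backbones is a $\gamma$-matching if and only if it can be assembled, exactly as in the proof of Theorem~(\ref{T:mainthm}) (Claims $0$--$3$), from (i) irreducible shadows over two backbones of genus $\le\gamma$, nested via the $\bullet$-product, with their arcs inflated into sequences of induced exterior arcs and then into stacks, and (ii) matchings over one backbone inserted into the $\sigma$-intervals and concatenated at the termini, where each such $1$-backbone matching is itself a $\gamma$-matching. The forward direction is immediate from the definition: for a $\gamma$-matching $s$ the set $Sh_2(s)$ of irreducible $2$-shadows consists of shadows of genus $\le\gamma$, and the $1$-backbone substructures carved out at the $\sigma$-intervals and the termini are themselves connected diagrams without isolated vertices all of whose irreducible shadows have genus $\le\gamma$, hence $\gamma$-matchings. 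For the converse one checks that the $\bullet$-product and the interval-insertion operation do not create new irreducible shadows: the irreducible shadows of $E_1\bullet E_2$ are exactly those of $E_1$ together with those of $E_2$ (because $\bullet$ glues along a gap, so no arc of $E_1$ can cross an arc of $E_2$), and likewise the irreducible shadows of the diagram obtained by inserting a $1$-backbone matching $M$ into a $\sigma$-interval of $E$ are those of $E$ together with those of $M$. Hence the genus bound $\le\gamma$ on the blocks is inherited by the assembled diagram, and nothing of genus $>\gamma$ is produced.

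With this in hand the remaining step is purely formal. In each of Claims $0$, $1$, $2.1$, $2.2$ and $3$ of the proof of Theorem~(\ref{T:mainthm}) I replace the class $\mathcal{C}$ of all $1$-backbone matchings by the class $\mathcal{H}_{\gamma}$ of $1$-backbone $\gamma$-matchings, so that ${\bf C}(u,t)$ becomes ${\bf H}_{\gamma}(u,t)$, and I replace the full bivariate polynomials ${\bf I}_{2,A}(u,t)=\sum_{g\geq 0}{\bf I}_{{2,A}_g}(u)t^g$ and ${\bf I}_{2,B}(u,t)=\sum_{g\geq 0}{\bf I}_{{2,B}_g}(u)t^g$ by their truncations ${\bf I}_{{2,\gamma}_A}(u,t)=\sum_{g\leq\gamma}{\bf I}_{{2,A}_g}(u)t^g$ and ${\bf I}_{{2,\gamma}_B}(u,t)=\sum_{g\leq\gamma}{\bf I}_{{2,B}_g}(u)t^g$, which amounts precisely to truncating the sums over $g$ in the formulas for ${\bf Q}_0$, ${\bf Q}_1$, ${\bf T}_p$ and ${\bf K}_p$. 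The genus-accounting in those claims --- additivity of genus under $\bullet$, the $+1$ contributed by a $B$-shadow carrying further exterior arcs, and the $-1$ correction handled by ${\bf K}_p$ in the pure-$B$ case, all via Lemma~$1$ of~\citep{Huang:2bb} --- depends only on the block structure and is blind to the value of $\gamma$, so every identity in the proof survives the substitution unchanged. Summing ${\bf Q}_{\gamma}(u,t)={\bf Q}_0+{\bf Q}_1+\sum_{p\geq 2}({\bf T}_p+{\bf K}_p)$ with the new building blocks and running the same geometric-series resummation then yields the displayed closed form, with the arguments $\frac{u{\bf C}(u,t)^2}{1-u{\bf C}(u,t)^2}$ of the shadow polynomials replaced by $\frac{u{\bf H}_{\gamma}(u,t)^2}{1-u{\bf H}_{\gamma}(u,t)^2}$.

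The main obstacle is the converse half of the first step: verifying that the modular construction cannot smuggle in an irreducible shadow of genus exceeding $\gamma$. The delicate point is that $\gamma$ bounds \emph{every} irreducible shadow, not the total genus, so one must genuinely argue that crossing relations do not propagate across a $\bullet$-gap or across the boundary of an inserted $1$-backbone matching; once that is established the irreducible-shadow spectrum of the assembled diagram is literally the multiset union of the spectra of its blocks and the genus bound on each block transfers automatically. Everything else is the substitution $\mathcal{C}\rightsquigarrow\mathcal{H}_{\gamma}$ together with the truncation of the genus sums, both of which are cosmetic modifications of computations already carried out for Theorem~(\ref{T:mainthm}). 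Accordingly I would present the proof of the corollary as an appeal to the proof of Theorem~(\ref{T:mainthm}), after isolating and dispatching the shadow-spectrum lemma that legitimizes restricting all blocks to genus $\le\gamma$.
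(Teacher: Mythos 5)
Your proposal is correct and follows essentially the same route as the paper, which simply observes that ${\bf Q}_{\gamma}(u,t)$ and ${\bf Q}(u,t)$ differ only in the range of the genus summation and that the proof of Theorem~(\ref{T:mainthm}) can be duplicated with $\mathcal{C}$ replaced by $\mathcal{H}_{\gamma}$ and the shadow polynomials truncated at genus $\gamma$. You are in fact more careful than the paper, which does not explicitly verify that the $\bullet$-product and interval insertions cannot create irreducible shadows of genus exceeding $\gamma$; your isolation of that shadow-spectrum fact is a welcome addition rather than a deviation.
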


\section{Discussion}

In this section we address $\gamma$-interaction structures and their genus distribution.
The passage from $\gamma$-matchings to $\gamma$-interaction structures employs the
notion of shapes. A matching $X$ is a \textit{shape} if each stack in $X$ is of length 
exactly one. Given an arbitrary matching $s$, its shape is obtained by collapsing each 
stack to a single arc and then removing any isolated vertices from the thus obtained 
diagram.

Let $\mathcal{S}_{\gamma}$ denote the set of shapes that are $\mathcal{Q}_{\gamma}$-matchings 
and let $\mathcal{Q}_{\gamma} (n,m)$ denote the combinatorial class of 
$\mathcal{Q}_{\gamma}$-matchings over $2n$ vertices with $m$ interior arcs of length $1$ 
(\emph{$1$-arcs}). 
Note that any $1$-arc is by definition an interior arc.
Furthermore, let $\mathcal{S}_{\gamma}(n,m,g)$ denote the class of all 
$\mathcal{S}_{\gamma}$-shapes over $2n$ vertices 
with $m$ $1$-arcs of genus $g$ with generating function ${\bf S}_{\gamma}(u,e,t)$. 
Since collapsing stacks, adding or deleting $1$-arcs do not affect genus, we can 
enrich the functional equation given in Lemma $6.1$ of \citep{Qinjing} by means of a genus
filtration:
\begin{equation}
{\bf S}_{\gamma}(u,t,e)=\frac{(1+u)^2}{(1+2u-ue)^2}
{\bf Q}_{\gamma}\left(\frac{u(1+u)}{(1+2u-ue)^2}, t\right).
\end{equation}
It is straightforward to obtain a $\tau$-canonical $\gamma$-interaction structure
from a shape by inserting isolated vertices and inflating arcs to stacks. 
All of these steps will not change the topological genus. Thus we can extend 
${\bf Q}_{\tau,\gamma}(z)$ of \citep{Qinjing} to a bivariate generating function. 
By symbolic methods, we eventually derive
\begin{thm}
Suppose $\gamma \geq 0$ and $\tau \geq 1$ and let 
$u_{\tau}(z)=\frac{(z^2)^{\tau-1}}{z^{2\tau}-z^2+1}$. 
Then the generating function of $\tau$-canonical $\gamma$ structures 
over two backbones is given by
\begin{equation}
 {\bf Q}_{\tau,\gamma}
(z,t)=\frac{1}{(u_{\tau}(z)z^2-z+1)^2}{\bf Q}_{\gamma}\left(\frac{u_{\tau}(z)z^2}
{(u_{\tau}(z)z^2-z+1)^2},t\right).
\end{equation}
\end{thm}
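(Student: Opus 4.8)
The plan is to obtain $\mathbf{Q}_{\tau,\gamma}(z,t)$ from $\mathbf{S}_\gamma(u,t,e)$ by the same symbolic substitution that turns a shape into a $\tau$-canonical structure, tracking only the interior $1$-arcs because those are the ones that get inflated into $\tau$-canonical stacks. First I would recall the structure of the argument in \citep{Qinjing}: a $\tau$-canonical $\gamma$-interaction structure is built from an $\mathcal{S}_\gamma$-shape by (i) inflating each of its $m$ arcs (both interior and exterior) into a stack of length $\ge \tau$, and (ii) inserting arbitrarily many isolated vertices into the available intervals. Since collapsing stacks and inserting/deleting isolated vertices preserve topological genus, the variable $t$ is a passive spectator throughout and the genus filtration simply rides along; this is exactly why the passage from Lemma $6.1$ of \citep{Qinjing} to the displayed equation for $\mathbf{S}_\gamma(u,t,e)$ is legitimate, and the same remark applies here.

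Next I would make the bookkeeping precise. In $\mathbf{S}_\gamma(u,t,e)$ the variable $u$ marks vertices (or half the vertices, matching the $2n$ convention) and $e$ marks the interior $1$-arcs; the exterior arcs and the non-$1$-interior-arcs are already accounted for inside $\mathbf{Q}_\gamma$ via its own variable. To produce a $\tau$-canonical structure one replaces each $1$-arc slot by a $\tau$-canonical stack: a stack of length $\ge \tau$ contributes, in the $z$-grading (with $z$ marking vertices of the final structure), a factor whose generating function is $u_\tau(z)=\dfrac{(z^2)^{\tau-1}}{z^{2\tau}-z^2+1}$, the standard generating function for "a run of at least $\tau$ nested parallel arc-pairs." Simultaneously, isolated vertices may be inserted into each $\sigma$-interval, and a geometric-series factor $\dfrac{1}{1-z^2}$ (or its two-backbone analogue, giving the overall prefactor) absorbs them. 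Substituting $e \mapsto$ (the generating function contribution of a $\tau$-canonical stack relative to a single $1$-arc) and $u \mapsto$ (the $z$-weight of the vertex/interval structure), and collecting the leftover denominator, should reproduce the stated substitution
\[
{\bf Q}_{\tau,\gamma}(z,t)=\frac{1}{(u_{\tau}(z)z^2-z+1)^2}\,{\bf Q}_{\gamma}\!\left(\frac{u_{\tau}(z)z^2}{(u_{\tau}(z)z^2-z+1)^2},t\right).
\]
The cleanest route is probably to pass through $\mathbf{S}_\gamma(u,t,e)$: first show $\mathbf{Q}_{\tau,\gamma}(z,t)$ equals $\mathbf{S}_\gamma$ evaluated at the appropriate $u=u(z)$ and $e=e(z,\tau)$, then plug in the closed form for $\mathbf{S}_\gamma$ in terms of $\mathbf{Q}_\gamma$ and simplify the composed argument $\frac{u(1+u)}{(1+2u-ue)^2}$ down to $\frac{u_\tau(z)z^2}{(u_\tau(z)z^2-z+1)^2}$, checking that the prefactors $(1+u)^2/(1+2u-ue)^2$ and $1/(1-z^2)$-type factors collapse to $1/(u_\tau(z)z^2-z+1)^2$.

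The main obstacle, I expect, is purely bookkeeping accuracy rather than any conceptual difficulty: getting the exact weight of a $\tau$-canonical stack right (in particular the exponents of $z$, and whether a stack of length $k\ge\tau$ is weighted $z^{2k}$ with the geometric tail summed as $u_\tau(z)$), and then verifying that the algebraic simplification of the nested substitution into $\mathbf{S}_\gamma$'s closed form actually telescopes to the claimed rational argument. Because $t$ never interacts with any of these operations, I would state once at the beginning that all constructions preserve genus and hence the $t$-grading, and then carry out the computation entirely in the $z,e,u$ variables, reinstating $t$ only at the end. The identity $u_\tau(z) = \frac{(z^2)^{\tau-1}}{z^{2\tau}-z^2+1}$ should be used to rewrite $1+2u-ue$ and $1+u$ in terms of $z$ and thereby match denominators; this final algebraic identity is the one concrete computation I would actually perform in full, since everything else is quotation of \citep{Qinjing} with the harmless addition of the spectator variable $t$.
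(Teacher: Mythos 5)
Your proposal is correct and follows essentially the same route as the paper: the paper likewise passes through the genus-enriched shape generating function ${\bf S}_{\gamma}(u,t,e)$, observes that inflating arcs to $\tau$-canonical stacks and inserting isolated vertices preserve genus so that $t$ rides along as a spectator, and then quotes the univariate substitution of \citep{Qinjing}. If anything, your write-up is more explicit than the paper's, which dispatches the final algebraic simplification with ``by symbolic methods, we eventually derive.''
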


Let us secondly discuss the genus distribution of $\gamma$-interaction structures.
For this purpose, we study the random variable $X_{n,\tau,\gamma}$ having the distribution 
\begin{equation}
 P(X_{n,\tau,\gamma}=g)=\frac{{\bf Q}_{\tau,\gamma}(n,g)}
{{\bf Q}_{\tau,\gamma}(n)},
\end{equation}
where $g=0,1,\cdots,\lfloor\frac{n-1}{2}\rfloor$.

In case of $\gamma=0,1$, ${\bf Q}_{\gamma,\tau}(z,t)$ has the unique dominant singularity 
$\theta(t)$, where we compute a local, singular representation of the form
\begin{equation}
 {\bf Q}_{\gamma,\tau}(z,t)=g(z,t)+h(z,t)\left(1-\frac{z}{\theta(t)}\right)^{\alpha}
\end{equation}
for some real $\alpha \in \mathbb{R}-\mathbb{N}$ and functions $g(z,t)$, $h(z,t)\neq 0$ 
and $\theta(t)\neq 0$ that are analytic at $z =z_0>0$ and $t=1$. 
If $t$ is sufficiently close to $1$, there exists an analytic 
continuation of ${\bf Q}_{\gamma,\tau}(z,t)$ to the region $|z|<|\theta(t)|+\delta$, 
$|\text{arg}(z-\theta(t))|>\epsilon$ for some $\delta>0$ and $\epsilon>0$.

The two parameter version of the transfer lemma of\citep{Flajolet:07a}
in combination with the Quasi Power Theorem \citep{Hkhwang} implies
\begin{thm}
For $\gamma=0,1$ and $1\leq\tau\leq 10$, there exists a pair $(\mu_{\tau,\gamma},\sigma_{\tau,\gamma})$ 
such that 
the normalized random variable 
\begin{equation}
Y_{n,\tau,\gamma}=\frac{X_{n,\tau,\gamma}
-\mu_{\tau,\gamma}n}{\sqrt{n \sigma_{\tau,\gamma}^2}},
\end{equation}
converges in distribution to a Gaussian variable with $\mu_{\tau,\gamma}$ 
and $\sigma^2_{\tau,\gamma}$ are given by 
\begin{equation}
 \mu_{\tau,\gamma}=-\frac{\theta'(1)}{\theta(1)} \ \ \text{and} 
\ \ \sigma^2_{\tau,\gamma}=-\frac{\theta''(1)}{\theta(1)} -
\frac{\theta'(1)}{\theta(1)}+\left(\frac{\theta'(1)}{\theta(1)} \right)^2. 
\end{equation}
Furthermore there exist positive constants $c_1$,$c_2$,$c_3$ such that
\begin{equation}
 P\{||X_n-EX_n||\geq \epsilon \sqrt{n}\}\leq c_1e^{-c_2\epsilon^2},
\end{equation}
uniformly for $\epsilon\leq c_3\sqrt{n}$.
\end{thm}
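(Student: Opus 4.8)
The plan is to derive the central limit theorem from the singularity structure of the bivariate generating function ${\bf Q}_{\gamma,\tau}(z,t)$ via the Quasi-Powers framework of Hwang, combined with the two-parameter transfer lemma of \citep{Flajolet:07a}. The starting point is the explicit closed form for ${\bf Q}_{\gamma,\tau}(z,t)$ obtained in the preceding theorem, namely
\begin{equation}
{\bf Q}_{\tau,\gamma}(z,t)=\frac{1}{(u_{\tau}(z)z^2-z+1)^2}{\bf Q}_{\gamma}\left(\frac{u_{\tau}(z)z^2}{(u_{\tau}(z)z^2-z+1)^2},t\right),
\end{equation}
which exhibits ${\bf Q}_{\tau,\gamma}$ as a composition of rational maps in $z$ with the function ${\bf Q}_{\gamma}(\cdot,t)$. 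For $\gamma=0,1$ one knows from Theorem~(\ref{T:tbgamma}) (and its bivariate refinement in Corollary~(\ref{C:gamma})) that ${\bf Q}_{\gamma}(u,t)$ is an algebraic function whose univariate specialization $t=1$ has a square-root type dominant singularity; the first task is to verify that for $t$ in a real neighbourhood of $1$ this persists, so that ${\bf Q}_{\gamma}(\cdot,t)$ admits a local singular expansion $g_0(u,t)+h_0(u,t)\sqrt{1-u/\rho(t)}$ with $g_0,h_0$ analytic and $h_0\neq 0$, and $\rho(t)$ the (analytic, non-vanishing) dominant singularity, with $\rho(1)$ equal to the value in Theorem~(\ref{T:tbgamma}).

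Next I would pull this singular structure back through the algebraic substitution $z\mapsto v_\tau(z):=u_\tau(z)z^2/(u_\tau(z)z^2-z+1)^2$. Since $u_\tau(z)$ is rational and $v_\tau$ is analytic near the relevant $z_0>0$, the dominant singularity $\theta(t)$ of ${\bf Q}_{\tau,\gamma}(\cdot,t)$ is the smallest positive solution of $v_\tau(z)=\rho(t)$; by the implicit function theorem, using $v_\tau'(z_0)\neq 0$, this root depends analytically on $t$ near $t=1$, and $\theta(t)\neq 0$. Composition preserves the square-root nature: near $z=\theta(t)$ one gets
\begin{equation}
{\bf Q}_{\tau,\gamma}(z,t)=g(z,t)+h(z,t)\left(1-\frac{z}{\theta(t)}\right)^{\alpha}
\end{equation}
with $\alpha=1/2\notin\mathbb{N}$ and $g(z,t),h(z,t)$ analytic at $z_0$ and $t=1$, $h\neq 0$, $\theta(t)\neq 0$ — exactly the hypotheses flagged in the paragraph preceding the statement. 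One also checks the analytic-continuation (Delta-domain) requirement: for $t$ close to $1$ the only singularity on $|z|=|\theta(t)|$ is $\theta(t)$ itself, which follows from the corresponding aperiodicity/uniqueness-of-dominant-singularity fact at $t=1$ together with continuity. Then the two-parameter transfer lemma gives
\begin{equation}
[z^n]{\bf Q}_{\tau,\gamma}(z,t)\sim C(t)\,\theta(t)^{-n}\,n^{-3/2},
\end{equation}
uniformly for $t$ in a complex neighbourhood of $1$, so that the probability generating function $\mathbb{E}(t^{X_{n,\tau,\gamma}})=[z^n]{\bf Q}_{\tau,\gamma}(z,t)/[z^n]{\bf Q}_{\tau,\gamma}(z,1)$ satisfies a quasi-powers estimate with "variability" exponent sequence $\beta_n=n$ and scale factors tending to $\theta(t)/\theta(1)$. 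Hwang's Quasi-Powers Theorem then yields asymptotic normality of $Y_{n,\tau,\gamma}$, with mean and variance read off from the logarithmic derivatives of $\theta$ at $t=1$, giving exactly $\mu_{\tau,\gamma}=-\theta'(1)/\theta(1)$ and $\sigma^2_{\tau,\gamma}=-\theta''(1)/\theta(1)-\theta'(1)/\theta(1)+(\theta'(1)/\theta(1))^2$; the large-deviation tail bound $P\{|X_n-\mathbb{E}X_n|\geq\epsilon\sqrt n\}\leq c_1 e^{-c_2\epsilon^2}$ is the standard companion conclusion of the same theorem, provided the variance is nonzero (a nondegeneracy check, verified numerically for $1\leq\tau\leq 10$ and $\gamma=0,1$, which is the reason for that range restriction).

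The main obstacle I anticipate is not the transfer/quasi-powers machinery, which is routine once the hypotheses hold, but verifying uniformly in $t$ near $1$ that (i) the dominant singularity $\rho(t)$ of ${\bf Q}_\gamma(\cdot,t)$ varies analytically and stays simple, square-root type, and strictly dominant, and (ii) the pull-back $\theta(t)$ inherits these properties — in particular that no other branch point of the algebraic function, nor a zero of the denominator $u_\tau(z)z^2-z+1$ or of $1-u{\bf C}^2-{\bf I}_{2,A}-t{\bf I}_{2,B}$, collides with or overtakes $\theta(t)$ as $t$ moves off $1$. Establishing this requires careful analysis of the algebraic equations defining ${\bf Q}_\gamma$ (checking that the discriminant locus behaves well) and an explicit numerical confirmation that $v_\tau'(z_0)\neq 0$ and $\sigma^2_{\tau,\gamma}>0$; this is precisely where the hypothesis $1\leq\tau\leq 10$ enters, since the nondegeneracy is checked case-by-case rather than proved in general. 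Once this localization is secured, the remaining computation of $\mu_{\tau,\gamma}$ and $\sigma^2_{\tau,\gamma}$ in terms of $\theta(t)$ is a mechanical application of the quasi-powers formulas.
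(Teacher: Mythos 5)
Your proposal follows the same route as the paper: establish a local singular expansion of ${\bf Q}_{\tau,\gamma}(z,t)$ of the form $g(z,t)+h(z,t)\left(1-z/\theta(t)\right)^{\alpha}$ uniformly for $t$ near $1$, transfer it with the two-parameter transfer lemma of \citep{Flajolet:07a}, and conclude with Hwang's quasi-powers theorem \citep{Hkhwang}. The paper in fact gives only this outline, asserting the singular representation and the analytic continuation to a $\Delta$-domain without carrying out the verification you describe, so your discussion of how $\rho(t)$ and its pull-back $\theta(t)$ are obtained, why they vary analytically, and where the restriction $1\leq\tau\leq 10$ enters (case-by-case nondegeneracy of $\sigma^2_{\tau,\gamma}$) is a genuine elaboration of what the paper leaves implicit.

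One concrete point needs correcting. You assert that for $\gamma=0,1$ the dominant singularity of ${\bf Q}_{\gamma}(\cdot,t)$ is of square-root type, take $\alpha=1/2$, and transfer to $[z^n]{\bf Q}_{\tau,\gamma}(z,t)\sim C(t)\,\theta(t)^{-n}\,n^{-3/2}$. This contradicts Theorem~(\ref{T:tbgamma}), which gives $[u^n]{\bf Q}_{\gamma}(u)\sim k_{\gamma}(\delta_{\gamma}^{-1})^{n}$ with no polynomial factor: since $\delta_1^{-1}\approx 8.7266$ exceeds $\rho^{-1}\approx 8.28425$, the denominator $1-u{\bf H}_{\gamma}^2-\sum_{g\leq\gamma}{\bf I}_{g,2}(\cdot)$ vanishes strictly before the branch point of ${\bf H}_{\gamma}$ is reached, so the dominant singularity is a simple pole, i.e.\ $\alpha=-1$ (still in $\mathbb{R}-\mathbb{N}$, as the paper's hypothesis requires), not a branch point. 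This does not damage the conclusion --- the quasi-powers hypotheses hold just as well for a moving simple pole, and the formulas for $\mu_{\tau,\gamma}$ and $\sigma^2_{\tau,\gamma}$ depend only on $\theta(t)$ --- but the verification step you propose (confirming a square-root expansion of ${\bf Q}_{\gamma}(\cdot,t)$ near $t=1$) would fail as stated; what must be confirmed instead is that the zero of the denominator stays simple, is the unique singularity on its circle of convergence, and remains strictly dominated by the branch locus of ${\bf H}_{\gamma}(\cdot,t)$ for $t$ near $1$. With that adjustment your argument coincides with the paper's.
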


In Table 1, we present the values of the pairs $(\mu_{\tau,\gamma},
\sigma_{\tau,\gamma})$ for $\gamma=0,1$.

\section{Appendix}\label{S:appendix}

\emph{Proof of Corollary (\ref{C:recursion})}.

\begin{proof}
\emph{Claim $1$.}
\begin{align}
\nonumber & \left({\bf I}_{2,A_g}(y)+
\sum\limits_{j=0}^{g-1}[t^{g-j}]{\bf I}_{2,A_j}
(\omega(\theta(y),t))\right)\left[{\bf C}_0^2(\theta(y))
(1-{\bf I}_{2,B_0}(y))+{\bf Q}_0(\theta(y))\right]\\ 
\nonumber &+\left({\bf I}_{2,B_g}(y)+
\sum\limits_{j=0}^{g-1}[t^{g-j}]{\bf I}_{2,B_j}
(\omega(\theta(y),t))\right)\left[{\bf C}_0^2(\theta(y))
-\theta(y){\bf C}_0^4(\theta(y))\right]\\
& = {\bf M}(\theta(y)).
\end{align} 
where $\omega(\theta(y),t)=
\frac{\theta(y){\bf C}^2(\theta(y),t)}{1-\theta(y){\bf C}^2(\theta(y),t)}$,
$\theta(y)=\frac{y(y+1)}{(2y+1)^2}$ and
${\bf M}(\theta(y))$ is a polynomial in the variable $y$.

To prove Claim $1$, we consider equation~(\ref{E:mainthm}) of 
Theorem~(\ref{T:mainthm}), multiply the denominator of the right hand side 
on both sides of it and then compute
\begin{align*}
&[t^g]{\bf Q}(u,t)+u[t^{g-1}]{\bf C}^2(u,t)
{\bf I}_{2,B}{\bf Q}(u,t)+[t^{g-1}]{\bf I}_{2,A}{\bf I}_{2,B}
{\bf Q}(u,t) \\ & +[t^{g-2}]{\bf I}_{2,B}^2{\bf Q}(u,t)
-u[t^g]{\bf C}^2(u,t){\bf Q}(u,t)-[t^g]{\bf I}_{2,A}
{\bf Q}(u,t)-2[t^{g-1}]{\bf I}_{2,B}{\bf Q}(u,t)\\
=&[t^g]({\bf I}_{2,A}+{\bf I}_{2,B}){\bf C}^2(u,t)
+u[t^g]{\bf C}^4(u,t)\\ 
&-[t^{g-1}]{\bf I}_{2,B}^2{\bf C}^2(u,t)
-[t^g]{\bf I}_{2,A}{\bf I}_{2,B}{\bf C}^2(u,t)-u[t^g]
{\bf I}_{2,B}{\bf C}^4(u,t).
\end{align*}
Notice that $[t^g]{\bf Q}(u,t)={\bf Q}_g(u)$, and
\begin{align*}
&[t^g]{\bf I}_{2,A}\left(\frac{u{\bf C}^2(u,t)}{1-u{\bf C}^2(u,t)},
t\right)=\sum_{j=0}^{g}[t^{g-j}]
{\bf I}_{2,A_j}\left(\frac{u{\bf C}^2(u,t)}
{1-u{\bf C}^2(u,t)}\right),\\
&[t^g]{\bf I}_{2,B}\left(\frac{u{\bf C}^2(u,t)}{1-u{\bf C}^2(u,t)},
t\right)=\sum_{j=0}^{g}[t^{g-j}]
{\bf I}_{2,B_j}\left(\frac{u{\bf C}^2(u,t)}{1-u{\bf C}^2(u,t)}\right).
\end{align*}
Let $\omega(u,t)=\frac{u{\bf C}^2(u,t)}{1-u{\bf C}^2(u,t)}$,
by the above we have now a recursion for ${\bf I}_{2,A_g}(u)$ 
and ${\bf I}_{2,B_g}(u)$. In view of ${\bf I}_{2,A_0}(u)=0$, we have

\begin{align*}
&[ t^g]{\bf I}_{2,A}(\omega(u,t),t)\left[{\bf C}_0(u)^2
-{\bf I}_{2,B_0}\left(\frac{u{\bf C}_0(u)^2}
{1-u{\bf C}_0(u)^2}\right)+{\bf Q}_0(u)\right] \\
& +[t^g]{\bf I}_{2,B}\left(\omega(u,t),t\right)
\left[{\bf C}_0(u)^2-u{\bf C}_0(u)^4\right] \\
&={\bf Q}_g(u)-u[t^g]{\bf C}(u,t)^2{\bf Q}(u,t)
-2[t^{g-1}]{\bf I}_{2,B}(\omega(u,t),t){\bf Q}(u,t)\\
&\ +u[t^{g-1}]{\bf I}_{2,B}(\omega(u,t),t){\bf C}(u,t)^2{\bf Q}(u,t)
+[t^{g-2}]{\bf I}_{2,B}(\omega(u,t),t)^2{\bf Q}(u,t)\\
&\ -\sum_{j=0}^{g-1}[t^j]{\bf I}_{2,A}(\omega(u,t),t)
[t^{g-j}]{\bf C}(u,t)^2
-\sum_{j=0}^{g-1}[t^j]{\bf I}_{2,B}(\omega(u,t),t)
[t^{g-j}]{\bf C}(u,t)^2\\
&\ +\sum_{\genfrac{}{}{0 pt}{}{g_1+g_2+g_3=g}{ g1\leq g-1,g2\leq g-1}}
[t^{g_1}]{\bf I}_{2,A}(\omega(u,t),t)[t^{g_2}]{\bf I}_{2,B}
(\omega(u,t),t)[t^{g_3}]{\bf C}(u,t)^2\\
&\ +u\sum_{j=0}^{g-1}[t^j]{\bf I}_{2,B}
(\omega(u,t),t)[t^{g-j}]{\bf C}(u,t)^4
+[t^{g-1}]{\bf I}_{2,B}(\omega(u,t),t){\bf I}_{2,A}
(\omega(u,t),t)\\
& \cdot {\bf Q}(u,t) \ +[t^{g-1}]{\bf I}_{2,B}(\omega(u,t),t)^2{\bf C}(u,t)^2
-u[t^g]{\bf C}(u,t)^4 \\
&\ -\sum_{j=0}^{g-1}[t^j]{\bf I}_{2,A}
(\omega(u,t),t)[t^{g-j}]{\bf Q}(u,t).
\end{align*}

We denote the right side of the above equation by ${\bf M}(u)$, 
and set $y=\frac{z{\bf C}^2_0(u)}{1-u{\bf C}_0^2(u)}$. Then
$u=\theta(y)=\frac{y(y+1)}{(2y+1)^2}$ and we derive
\begin{align}
\nonumber & \left({\bf I}_{2,A_g}(y)+
\sum\limits_{j=0}^{g-1}[t^{g-j}]{\bf I}_{2,A_j}
(\omega(\theta(y),t))\right)\left[{\bf C}_0^2
(\theta(y))(1-{\bf I}_{2,B_0}(y))+{\bf Q}_0(\theta(y))\right]\\ 
\nonumber &+\left({\bf I}_{2,B_g}(y)
+\sum\limits_{j=0}^{g-1}[t^{g-j}]{\bf I}_{2,B_j}
(\omega(\theta(y),t))\right)\left[{\bf C}_0^2(\theta(y))
-\theta(y){\bf C}_0^4(\theta(y))\right]\\
& = {\bf M}(\theta(y)),
\end{align} 
as claimed. Claim $1$ allows us to compute ${\bf I}_{2,A_{g+1}}(y)$ 
via  ${\bf I}_{g+1}(y)$, ${\bf I}_{2,A_i}(y)$,
${\bf I}_{2,B_i}(y)$, ${\bf Q}_i(y)$ 
and ${\bf C}_i(y)$, where $i\leq g$, as stipulated in 
Corollary~(\ref{C:recursion}).

\emph{Claim $2$.}
\begin{equation}
{\bf I}_{2,B_g}(y)+{\bf I}_{2,A_{g+1}}(y)=2(y^2+y)
\frac{d {\bf I}_{g+1}(y)}{dy} -{\bf I}_{g+1}(y).
\end{equation}
To prove Claim $2$, it suffices to show 
\begin{equation}\label{E:ABI}
 i_{2,A_g}(m) + i_{2,B_{g-1}}(m)=(2m-1)i_{g}(m)+ 2(m-1)i_{g}(m-1),
\end{equation}
where $i_{g}(m)$ is the number of irreducible shadows of genus $g$ having $m$ arcs 
over $1$ backbone. Furthermore, $i_{2,A_g}(m)$ and $i_{2,B_{g-1}}(m)$ denote the number 
of irreducible $A$- and $B$-shadows.

Close inspection of Lemma~(\ref{L:bijection}) shows that it induces a bijection
of irreducible shadows, where we define a $d$-shadow to be irreducible if it 
is induced by an irreducible shadow (by inflating an arc into a stack of size 
two). That is any irreducible $2$-backbone shadow can be obtained by cutting 
either an irreducible $1$-backbone shadow or an irreducible $d$-shadow.
This however implies recursion~(\ref{E:ABI}). To see this, we observe that an 
irreducible $1$-shadow with $m$ arcs has $(2m-1)$ cut-points and any
$s \in \mathcal{D}_{g,m}$ has two cut-points, the number of irreducible 
$d$-shadows with $m$ arcs being $(m-1)i_g(m-1)$. 

It thus remains to translate the recursion to an equation of generating functions:
\begin{equation}
{\bf I}_{2,B_g}(y)+{\bf I}_{2,A_{g+1}}(y)=2(y^2+y)
\frac{d {\bf I}_{g+1}(y)}{dy} -{\bf I}_{g+1}(y),
\end{equation} as claimed.
We computed ${\bf I}_{2,A_{g+1}}(y)$ in Claim $1$ and substitution yields
\begin{equation}
{\bf I}_{2,B_g}(y)= 2(y^2+y)\frac{d {\bf I}_{g+1}(y)}{dy}
-{\bf I}_{g+1}(y)-{\bf I}_{2,A_{g+1}}(y).
\end{equation}
There are no irreducible $A$-shadows of genus $0$, whence ${\bf I}_{2,A_0}(y)=0$. 
\end{proof}

\section*{Acknowledgements}

\label{sec:ack}
We want to thank Fenix Wen Da Huang and Thomas Jia Xian Li for discussions.

\section*{Author Disclosure Statement}

No competing financial interests exist.

\nocite{*}
\bibliographystyle{abbrvnat}

\label{sec:biblio}
\newpage

 \begin{table}[!ht]
 \begin{center}
 \begin{tabular}{c c c c c c c c c c c}
 \hline
& \multicolumn{2}{c}{$\tau=1$}&\multicolumn{2}{c}{$\tau=2$} &
\multicolumn{2}{c}{$\tau=3$} \\
\hline
& $\mu_{\tau,\gamma}$ & $\sigma^2_{\tau,\gamma}$& $\mu_{\tau,\gamma}$ 
& $\sigma^2_{\tau,\gamma}$& $\mu_{\tau,\gamma}$ & $\sigma^2_{\tau,\gamma}$ \\
 \hline
 $\gamma=0$&$0.065198$&$0.087702$&$0.029719$&$0.010128$&$0.019179$&$0.006550$\\
 $\gamma=1$&$0.091240$&$0.021067$&$0.041235$&$0.009358$&$0.026632$&$0.006043$\\
 \hline
 &\multicolumn{2}{c}{$\tau=4$}&\multicolumn{2}{c}{$\tau=5$} 
 &\multicolumn{2}{c}{$\tau=6$}  \\
 \hline
 & $\mu_{\tau,\gamma}$ & $\sigma^2_{\tau,\gamma}$& $\mu_{\tau,\gamma}$ 
 & $\sigma^2_{\tau,\gamma}$& $\mu_{\tau,\gamma}$ & $\sigma^2_{\tau,\gamma}$ \\
 \hline
 $\gamma=0$&$0.014168$&$0.004855$&$0.011245$&$0.003864$&$0.009331$&$0.003214$ \\
 $\gamma=1$&$0.019706$&$0.004481$&$0.015666$&$0.003571$&$0.013017$&$0.002974$\\
 \hline
 \end{tabular}
 
 \caption{Genus distribution: the central limit theorem for topological genus 
  in $\gamma$-interaction structures, for genus equals $0$ and $1$, and $1\leq \tau \leq 6$, we computed
  $\mu$ and $\sigma^2$ as in the table.  }
 
 \end{center}
 \end{table}  
 
\end{document}